\newtheorem{theorem}{Theorem}[section]
\newtheorem{lemma}[theorem]{Lemma}
\newtheorem{proposition}[theorem]{Proposition}
\newtheorem{corollary}[theorem]{Corollary}
\newtheorem{conjecture}[theorem]{Conjecture}
\newtheorem*{theorem*}{Theorem}
\theoremstyle{definition}
\newtheorem{definition}[theorem]{Definition}
\newtheorem{example}[theorem]{Example}
\newtheorem{example*}[theorem]{Example}
\theoremstyle{remark}
\newtheorem{remark}[theorem]{Remark}
\newcommand{\reg}{\textup{reg}}
\newcommand{\het}{\textup{ht}}
\newcommand{\rev}{\textup{rev}}
\newcommand{\gin}{\textup{gin}}
\newcommand{\depth}{\textup{depth}}
\newcommand{\iin}{\textup{in}}
\newcommand{\rank}{\textup{rank}}
\newcommand{\codim}{\textup{codim}}
\numberwithin{equation}{section}
\begin{document}
\begin{sloppypar}
\title{Restrictions on Hilbert coefficients give depths of graded domains}
\author{Cheng Meng}
\address{Yau Mathematical Sciences Center, Tsinghua University, Beijing 100084, China.}
\email{cheng319000@tsinghua.edu.cn}
\date{\today}

\begin{abstract}
In this paper, we prove that if $P$ is a homogeneous prime ideal inside a standard graded polynomial ring $S$ with $\dim(S/P)=d$, and for $s \leq d$, adjoining $s$ general linear forms to the prime ideal changes the $(d-s)$-th Hilbert coefficient by 1, then $\depth(S/P)=s-1$. This criterion also tells us about possible restrictions on the generic initial ideal of a prime ideal inside a polynomial ring.
\end{abstract}
\maketitle
\section{Introduction}
Let $k$ be an infinite field. When $k$ is algebraically closed, the famous Eisenbud-Goto conjecture claims that an inequality holds between several numerical invariants of a homogeneous prime $P$ in a polynomial ring over $k$:
\begin{conjecture}[\cite{eisenbud1984linear}]
Let $k$ be an algebraically closed field, $P \subset (x_1,\ldots,x_n)^2$ be a homogeneous ideal in $S=k[x_1,\ldots,x_n]$, then
$$\reg(P) \leq \deg(S/P)-\het(P)+1$$
\end{conjecture}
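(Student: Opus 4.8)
The plan is to split the inequality into the arithmetically Cohen--Macaulay case, where it follows from an elementary count, and the general case, where the real content lies; throughout I use the relation $\reg(P)=\reg(S/P)+1$ and work under the geometrically meaningful hypotheses that $P$ is prime and nondegenerate, since the bound is false for arbitrary homogeneous ideals (e.g. for disconnected reduced schemes) and it is exactly these hypotheses that the paper's graded-domain viewpoint exploits. Since $P\subset(x_1,\dots,x_n)^2$, the quotient $R:=S/P$ is nondegenerate and its embedding codimension equals $\het(P)=:c$. Set $d=\dim R$ and cut by $d$ general linear forms. If $R$ is Cohen--Macaulay these form a regular sequence, and the Artinian reduction $A$ has Hilbert function the $h$-vector $(1,c,h_2,\dots,h_s)$ with $s=\max\{i:h_i\neq 0\}$; here $\reg(S/P)=\reg(A)=s$ and $\deg(S/P)=1+c+\sum_{i\geq 2}h_i$. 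A standard graded Artinian algebra has no internal zeros in its Hilbert function, so $h_2,\dots,h_s\geq 1$ and hence $\sum_{i\geq 2}h_i\geq s-1$. Substituting gives $\reg(P)=s+1\leq 2+\sum_{i\geq 2}h_i=\deg(S/P)-\het(P)+1$, with equality exactly when $h_i=1$ for all $2\leq i\leq s$.

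For the general case I would pass to local cohomology and use the characterization $\reg(S/P)=\max_i\{a_i+i\}$, where $a_i=\max\{j:H^i_{\mathfrak{m}}(R)_j\neq 0\}$ (with $a_i=-\infty$ if the module vanishes). The top module $H^d_{\mathfrak{m}}(R)$ is governed by the Hilbert polynomial, so its contribution is controlled by the multiplicity $e_0=\deg(S/P)$ essentially as in the Cohen--Macaulay count above. This is where the theme of the present paper enters: the criterion proved earlier reads $\depth(R)$ off the way adjoining general linear forms perturbs the lower Hilbert coefficients, and since $\depth(R)=\min\{i:H^i_{\mathfrak{m}}(R)\neq 0\}$, control of the Hilbert coefficients under generic hyperplane sections pins down precisely the range $\depth(R)\leq i\leq d$ of indices for which intermediate cohomology can be nonzero, together with the jumps in multiplicity that those nonvanishing modules force.

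The main obstacle is bounding the top degrees $a_i$ of the intermediate modules $H^i_{\mathfrak{m}}(R)$ for $\depth(R)\leq i<d$ in terms of $\deg(S/P)$ and $\het(P)$ alone. The multiplicity and the codimension see only the generic geometry recorded by the top cohomology, whereas each $a_i$ with $i<d$ can a priori be large without raising $e_0$; this decoupling is exactly the mechanism behind the known counterexamples to the conjecture for non-Cohen--Macaulay primes. A genuine proof in this regime would therefore have to use primeness --- connectedness in codimension one, and normality after restricting to general linear sections --- in an essential way, beyond anything the Hilbert coefficients by themselves record, in order to bound these degrees; I expect essentially all of the difficulty to concentrate here.
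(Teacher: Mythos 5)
There is no proof to compare against: the statement you were asked to prove is labelled a \emph{Conjecture} in the paper, the paper supplies no argument for it, and in fact the surrounding text explicitly records that it is \emph{false} in general --- McCullough and Peeva's counterexamples via Rees-like algebras show that $\reg(P)$ cannot even be bounded by a polynomial in $\deg(S/P)$. (The paper's phrasing ``homogeneous ideal'' is itself a slip; the Eisenbud--Goto conjecture requires $P$ prime and nondegenerate, as you correctly noted, since otherwise two skew lines in $\mathbb{P}^3$ already violate the bound.) So no complete proof attempt can succeed, and yours does not claim to: your second and third paragraphs are a discussion of obstacles, not an argument. Concretely, the gap is exactly where you locate it --- there is no mechanism by which $\deg(S/P)$ and $\het(P)$ control the top degrees $a_i$ of the intermediate local cohomology modules $H^i_{\mathfrak{m}}(S/P)$ for $\depth(S/P)\leq i<d$, and the counterexamples exploit precisely this decoupling. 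Your appeal to the paper's depth criterion (\Cref{5.4}) also cannot close this gap: that result detects \emph{which} cohomological indices can be nonzero (the depth), not the \emph{degrees} in which the corresponding modules live, and regularity is a statement about degrees.

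What you do prove is the arithmetically Cohen--Macaulay case, and that argument is correct and classical: cut by $d$ general linear forms (a regular sequence, preserving regularity and degree), observe that the Artinian reduction has $h$-vector $(1,c,h_2,\dots,h_s)$ with no internal zeros since the algebra is standard graded, and count. This is a genuine, known special case of the conjecture, but it is not the statement as posed, and the general statement is not a theorem.
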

Here $\deg(S/P)$ is the multiplicity of $S/P$. This conjecture is proved for many special cases including curves and smooth surfaces, but it is false in general. The first counterexample is given by McCullough and Peeva in \cite{mccullough2018counterexamples} using Rees-like algebras. Their results imply that the regularity cannot be bounded by any polynomial in terms of $\deg(S/P)$.

When $P$ is a homogeneous prime ideal of $S$, we can take the graded reverse lexicographic order $<=<_{\rev}$ and talk about the generic initial ideal $\gin_<(P)$ of $P$. The above conjecture involves several invariants of $P$ including multiplicity, regularity and height, and all of them can be described using the combinatorial data of the generic initial ideal in a relatively simpler way. For example, let $J=\gin_<(P)$ and $G(J)$ be the monomial minimal generating set of $J$. Then by Bayer and Stillman's theorem in \cite{bayer1987criterion} and Eliahou and Kervaire's theorem in \cite{eliahou1990minimal}, $\reg(S/P)=\max\{\deg(u):u \in G(J)\}-1$ and $\depth(S/P)=n-\max\{i:x_i|u \in G(J)\}$, so a description of such a generic initial ideal may lead to similar inequalities of these invariants. Therefore, it makes sense to study the possible generic initial ideals of primes in a polynomial ring. Although we have a description of all the possible generic initial ideals (for example, see \cite{herzog2011monomial}) of a general homogeneous ideal, there may be more strict restrictions on the generic initial ideal of a homogeneous prime ideal. This paper gives such a restriction and shows that certain monomial ideals cannot be realized as the initial ideal or generic initial ideal of a homogeneous prime ideal. For example, in \Cref{4.3} we prove that in a polynomial ring in $n \geq 4$ variables, $(x_1^2,x_1x_2,x_2^2,x_1x_3,x_1x_4)$ cannot be the initial ideal or generic initial ideal of any prime ideal under $<_{\rev}$.  

Moreover, we give a numerical condition on the Hilbert coefficients of $S/P$ which gives $\depth(S/P)$ immediately.
\begin{theorem*}[See \Cref{5.4}]
Let $P$ be a homogeneous prime ideal in $S$, $\dim(S/P)=d$. Take $1 \leq s \leq d$ and let $r=n-s$. Choose $s$ general linear forms $l_1,\ldots,l_s$. Set $P_s=\pi_{l_1}\ldots\pi_{l_s}P \subset S(r)$. If $e_{d-s}(S/P)=e_{d-s}(S(r)/P_s)-(-1)^{d-s}$, then $\depth(S/P)=n-r-1$.
\end{theorem*}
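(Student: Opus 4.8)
Write $A=S/P$ and, for $1\le i\le s$, let $A_i=A/(l_1,\dots,l_i)A$ be the iterated generic hyperplane section, so that $A_0=A$ has dimension $d$, $\dim A_i=d-i$, and $A_s=S(r)/P_s$. Throughout I would read the Hilbert coefficient $e_j$ of a module $M$ as $h_M^{(j)}(1)/j!$, where $h_M$ is the numerator of the Hilbert series written over $(1-t)^{\dim M}$; for $j\le\dim M-1$ this is the usual coefficient, and it is this reading that makes $e_{d-s}(A_s)$ (the top, ``extra'' coefficient of the $(d-s)$-dimensional module $A_s$) meaningful.

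The first step is to reduce the hypothesis to a single length. Each $l_i$ is filter-regular, so $N_i:=0:_{A_{i-1}}l_i$ has finite length and fits in a graded exact sequence $0\to N_i(-1)\to A_{i-1}(-1)\xrightarrow{\,l_i\,}A_{i-1}\to A_i\to 0$, whence $H_{A_i}(t)=(1-t)H_{A_{i-1}}(t)+t\,H_{N_i}(t)$ and therefore $h_{A_i}(t)=h_{A_{i-1}}(t)+t(1-t)^{d-i}H_{N_i}(t)$. The correction at step $i$ vanishes to order $d-i$ at $t=1$, so its $(d-s)$-th derivative there is zero unless $i\ge s$; as $i\le s$, only the final section moves $e_{d-s}$, and a direct evaluation yields
\[ e_{d-s}(A_s)-e_{d-s}(A)=\frac{1}{(d-s)!}\,\frac{d^{\,d-s}}{dt^{\,d-s}}\Big(t(1-t)^{d-s}H_{N_s}(t)\Big)\Big|_{t=1}=(-1)^{d-s}\ell_s, \]
where $\ell_s:=\dim_k N_s$. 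Thus the hypothesis $e_{d-s}(S/P)=e_{d-s}(S(r)/P_s)-(-1)^{d-s}$ is exactly the statement $\ell_s=1$.

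Next I would settle the easy inequality $\depth(S/P)\le s-1$. A general linear form is a nonzerodivisor on a module precisely when that module has positive depth; and if $\depth M=0$ then $\depth(M/lM)=0$ as well, since $H^0_{\mathfrak m}(M/lM)\supseteq H^0_{\mathfrak m}(M)/l\,H^0_{\mathfrak m}(M)\ne 0$ by graded Nakayama. Hence the depth drops by exactly one along $A_0,A_1,\dots$ until it first reaches $0$, and then stays $0$; consequently $\ell_i\ne 0$ iff $\depth A_{i-1}=0$ iff $\depth(S/P)\le i-1$. In particular $\ell_s=1$ forces $\depth(S/P)\le s-1$ (and, since $S/P$ is a domain, this already requires $s\ge 2$).

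The hard part is the reverse inequality $\depth(S/P)\ge s-1$, i.e. ruling out $\depth(S/P)\le s-2$. This cannot come from the Hilbert-series bookkeeping alone: if $\depth A_{s-2}=0$ one still only gets $\ell_s\ge 1$ in general, and indeed for a non-prime monomial ideal of the shape $(x_1^2,x_1x_2,x_1x_3)$ one computes $\ell_s=1$ a full two steps past the depth. So primeness must be used in an essential way. My plan is to pass to $J=\gin_{\rev}(P)$, for which both $\depth$ and the section lengths $\ell_i$ become explicit combinatorics of the minimal generators $G(J)$ via Bayer--Stillman and Eliahou--Kervaire; the assumption $\ell_s=1$ together with $s\ge\depth+2$ would then pin down a small sub-configuration among the highest-indexed generators of $J$, and I would show this is exactly a pattern that the generic initial ideal of a prime cannot exhibit (the prototype being the forbidden ideal of \Cref{4.3}). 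Invoking that restriction contradicts $s\ge\depth+2$, giving $\depth(S/P)\ge s-1$, and combining with the previous step yields $\depth(S/P)=s-1=n-r-1$. I expect this last translation --- identifying ``$\ell_s=1$ two or more steps below the depth'' with a forbidden-for-primes configuration in $J$ --- to be the main obstacle, precisely because the corresponding statement fails for arbitrary homogeneous ideals.
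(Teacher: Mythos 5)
Your reduction of the hypothesis to the single statement $\ell_s=\dim_k\bigl(0:_{A_{s-1}}l_s\bigr)=1$ is correct, and so is the deduction that $\ell_s\neq 0$ forces $\depth(S/P)\leq s-1$: the Hilbert-series bookkeeping with the four-term sequences $0\to N_i(-1)\to A_{i-1}(-1)\to A_{i-1}\to A_i\to 0$ is a clean, direct substitute for what the paper does more indirectly (it passes to $J=\gin(P)$ and compares $e_{d-s}$ along $J_1=(J\cap k[x_1,\ldots,x_r])S\subset J\subset J_2=\Phi_r(J)$ in \Cref{5.2} and \Cref{5.3}). You have also correctly diagnosed that the content of the theorem is the reverse inequality and that it must use primeness, since the statement fails for general homogeneous ideals.

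But that reverse inequality is precisely where your proposal stops being a proof and becomes a research plan, and it is the entire point of the paper. The missing ingredient is \Cref{4.1}: if $J=\iin(P)$ is of Borel type and $\overline{\Phi_r}(J)=\Pi_r(J)+u$ with $u\in\Pi_r(J):\mathfrak{m}(r)$ (which is the translation of $\ell_s=1$ to the gin, via \Cref{3.5}, \Cref{5.2}, \Cref{5.3} --- a translation you also do not carry out), then $J$ has \emph{exactly one} minimal generator outside $k[x_1,\ldots,x_r]$, and that generator has the form $ux_{r+1}^e$; by Eliahou--Kervaire this pins $\depth(S/P)$ at exactly $n-r-1$, giving both inequalities at once. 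The proof of \Cref{4.1} is not a routine verification of a ``forbidden pattern'': one takes two elements $f_1,f_2$ of a reduced Gr\"obner basis of $P$ whose initial terms are two distinct extra generators $uw_1,uw_2$, shows each $f_i$ is irreducible (using primeness), splits $f_i=up_i+q_i$ with $p_i\in k[x_{r+1},\ldots,x_n]$, and proves that every monomial of $F=p_2f_1-p_1f_2=p_2q_1-p_1q_2$ lies outside $\iin(P)$, forcing $F=0$ and then a divisibility contradiction from irreducibility; a separate Borel-type argument then shows the unique extra generator is a pure power of $x_{r+1}$ times $u$. None of this is present or sketched in enough detail to be checked, so the proposal as written does not establish the theorem.
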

Here $\pi_{l_1},\ldots,\pi_{l_s}$ are projections from a polynomial ring to another polynomial ring with one less variable; the concrete definition is in \Cref{sec2}.

\textbf{Conventions in this paper.} In this paper we work with prime ideals in a polynomial ring. Unless otherwise stated, we make the following assumptions throughout the paper: $k$ is an infinite field, $S=k[x_1,\ldots,x_n]$ is the standard graded polynomial ring over $k$ in $n$ variables, and for $m \in \mathbb{N}$, $S(m)=k[x_1,\ldots,x_m]$ is the standard graded polynomial ring over $k$ in $m$ variables (so $S=S(n)$). The maximal homogeneous ideal of $S$ and $S(m)$ are denoted by $\mathfrak{m}$ and $\mathfrak{m}(m)$ respectively. We assume $P$ is a homogeneous prime ideal in $S$ and $I$ is an arbitrary homogeneous ideal in $S$. We assume $\dim(S/P)=d$, $\codim(S/P)=n-d=c$. When we take a sequence of general linear forms, the number of general linear forms is $s$ with $1 \leq s \leq d$ and $r=n-s$.

\section{Notations}\label{sec2}
Before we describe the restrictions on generic initial ideals of prime ideals, we introduce some notations on polynomial rings, monomials, and monomial orders. If $n_1<n_2$, there is a natural embedding $S(n_1) \hookrightarrow S(n_2)$. If a linear form $l=c_1x_1+\ldots+c_nx_n \in S_1$ satisfies $c_n \neq 0$, then the map $\eta:S(n-1) \hookrightarrow S \to S/lS$ is an isomorphism. We define the map $\pi_l: S \to S/lS \xrightarrow[]{\eta^{-1}} S(n-1)$. If $l=x_n$, we write $\pi_n=\pi_{x_n}:S \to S(n-1)$. For a sequence of linear forms $l_1,\ldots,l_s$, we define the following map iterately:
$$\Pi_{l_1\ldots l_s}=\pi_{\overline{l_s}}\pi_{\overline{l_{s-1}}}\ldots\pi_{l_1}:S \to S(r)$$
where $\overline{l_i}$ is the image of $l_i$ under $\Pi_{l_1\ldots l_{i-1}}$. This definition does not make sense for all sequences of linear forms; it is well-defined only if the $x_n$-coefficient of $l_n$ is nonzero, the $x_{n-1}$-coefficient of $\overline{l_2}=\pi_{l_1}(l_2)$ is nonzero, and so on. The above conditions are given by the nonvanishing of finitely many polynomials in coefficients of $l_1,l_2,\ldots,l_s$, therefore $\Pi_{l_1\ldots l_s}$ exists on a Zariski open subset in the space of $s$ linear forms, that is, it exists for $s$ \textit{general} linear forms. And it also makes sense when $l_1=x_n,l_2=x_{n-1},\ldots,l_s=x_{r+1}$. Let
$$\Pi_r=\Pi_{x_n,x_{n-1},\ldots,x_{r+1}}=\pi_{r+1}\pi_{r+2}\ldots\pi_n:S \to S(r)$$
In particular, we define $\Pi_n=\textup{id}_{S}$. If $I \subset S$ is a homogeneous $S$-ideal, the saturation of $I$ is denoted by $I^{sat}=I:\mathfrak{m}^\infty$.

We recall the definition of the initial ideal and generic initial ideal taken from \cite{elias1998six} and \cite{herzog2011monomial}. Let $<$ be a monomial order on $S$. For $0 \neq f \in S$, we can write $f$ as a $k$-linear combination of monomials, that is, $f=\sum_u a_uu,a_u \in k$. Define the initial of $f$ with respect to $<$, denoted by $\iin_<(f)$, as the largest monomial $u$ such that $a_u \neq 0$. The initial ideal of $I$ is $\iin_<(I)=(\iin_<(f)|f \in I)$.

Now suppose $k$ is an infinite field, then $GL_n(k)$ can be given a Zariski topology which makes it an irreducible topological space. Every linear map $\alpha \in GL_n(k)$ induces a linear automorphism of $S$; by abusing the notation we say $\alpha \in \textup{Aut}_k(S)$. We have the following proposition:
\begin{proposition}[\cite{ginchar02} and \cite{gincharp}]
We fix a monomial order $<$ and an ideal $I$. There exists a nonempty Zariski open set $U \subset GL_n(k)$ such that for all $\alpha \in U$, $\iin_<(\alpha(I))$ is independent of the choice of $\alpha$.    
\end{proposition}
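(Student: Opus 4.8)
The plan is to prove the existence of a generic initial ideal by a semicontinuity argument on each graded component, followed by a finite-generation step that glues the components together. Throughout I use that every $\alpha \in GL_n(k)$ acts as a graded $k$-automorphism of $S$, so that $\alpha(I)_t = \alpha(I_t)$ and, since $\alpha|_{S_t}$ is invertible, $\dim_k \alpha(I)_t = \dim_k I_t =: h(t)$ for every degree $t$; moreover passing to an initial ideal preserves Hilbert functions, so $\dim_k \iin_<(\alpha(I))_t = h(t)$ for \emph{all} $t$ and \emph{all} $\alpha$.

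First I would fix a degree $t$ and analyze the possible values of $\iin_<(\alpha(I))_t$. List the monomials of $S_t$ as $u_1 > u_2 > \cdots > u_N$ and pick a $k$-basis of $I_t$; applying $\alpha$ and expanding in the $u_j$ produces an $h(t) \times N$ matrix $A(\alpha)$ whose entries are polynomial in the entries of $\alpha$. The set $\iin_<(\alpha(I))_t$ is exactly the set of pivot columns of the reduced row echelon form of $A(\alpha)$, and which columns are pivots is governed by the vanishing and nonvanishing of the maximal minors of $A(\alpha)$. Consequently, for each admissible monomial set $M$ of size $h(t)$, the locus $U_M = \{\alpha : \iin_<(\alpha(I))_t = M\}$ is a locally closed subset of $GL_n(k)$, cut out by requiring one minor to be nonzero and certain $<$-earlier minors to vanish. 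This is the step I expect to be the main obstacle: translating the order-theoretic description of leading terms into genuinely polynomial (determinantal) conditions on $\alpha$, so that the strata $U_M$ are provably locally closed.

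Granting the stratification, I would invoke irreducibility of $GL_n(k)$. The finitely many locally closed sets $U_M$ partition $GL_n(k)$, so their closures cover it; as $GL_n(k)$ is irreducible, exactly one closure is everything, and that stratum---call its monomial set $M_0(t)$---is therefore a nonempty Zariski-open dense $U_t := U_{M_0(t)}$. (Two disjoint nonempty open sets cannot both be dense in an irreducible space, which also gives uniqueness of $M_0(t)$.) For $\alpha$ in the nonempty open set $U_t \cap U_{t+1}$ the ideal property of $\iin_<(\alpha(I))$ forces $x_i m \in M_0(t+1)$ whenever $m \in M_0(t)$, so the graded vector space $J := \bigoplus_t \operatorname{span}_k M_0(t)$ is closed under multiplication by variables, i.e.\ $J$ is a monomial ideal with $\dim_k J_t = h(t)$.

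Finally I would globalize. Since $S$ is Noetherian, $J$ is generated by monomials of degree at most some $T$, and each such generator lies in $M_0(t) = \iin_<(\alpha(I))_t$ for the corresponding $t \le T$ and every $\alpha \in U_t$. Set $U := \bigcap_{t \le T} U_t$, a finite intersection of dense open sets and hence a nonempty open subset of the irreducible $GL_n(k)$. For every $\alpha \in U$ the generators of $J$ lie in $\iin_<(\alpha(I))$, so $J \subseteq \iin_<(\alpha(I))$; but these two monomial ideals have the same Hilbert function $h$, forcing $J_t = \iin_<(\alpha(I))_t$ in every degree. Thus $\iin_<(\alpha(I)) = J$ is independent of $\alpha \in U$, which is the desired open set.
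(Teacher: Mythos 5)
The paper does not prove this proposition; it is quoted with citations (it is Galligo's theorem on the existence of the generic initial ideal, plus its positive-characteristic analogue), so there is no internal proof to compare against. Your argument is correct, and it is in substance the standard proof found in the cited sources (and in Eisenbud's \emph{Commutative Algebra}, Theorem 15.18, or Herzog--Hibi, Theorem 4.1.2): degree by degree, the set of leading monomials of $\alpha(I)_t$ is identified with the lexicographically first column subset supporting a nonvanishing maximal minor of a matrix whose entries are polynomial in the entries of $\alpha$; this stratifies $GL_n(k)$ into finitely many locally closed pieces, irreducibility yields exactly one dense open stratum $U_t$ in each degree, compatibility of the generic monomial sets across degrees (via the ideal property for a single $\alpha$ in $U_t\cap U_{t+1}$) produces a monomial ideal $J$ with the right Hilbert function, and Noetherianity lets you stop at a finite degree $T$ and take $U=\bigcap_{t\le T}U_t$. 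The step you flag as the main obstacle --- that the pivot column set is the lex-least subset with nonvanishing minor, so each stratum is cut out by one minor being nonzero and the lex-earlier ones vanishing --- is exactly the standard greedy-basis fact and is correct as you state it. Two small points worth making explicit in a write-up: the rows of $A(\alpha)$ are linearly independent because $\alpha|_{S_t}$ is invertible, so a pivot set of size $h(t)$ actually exists; and the whole degreewise framework uses that $I$ is homogeneous (the paper's standing convention), which is what guarantees that $\iin_<(\alpha(I))_t$ is spanned by the leading monomials of elements of $\alpha(I)_t$.
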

This common initial ideal is called the generic initial ideal of $I$, denoted by $\gin_<(I)$, or $\gin(I)$ if the order is clear.

Throughout this paper, we will work with the graded reverse lexicographic order $<=<_{\rev}$ with $x_1>x_2>\ldots>x_n$.
\begin{definition}
We say a monomial ideal $J$ is of Borel type if for any $i<j$ and a monomial $u \in J$ dividing $x_j$, there is some integer $s$ such that $x_i^su/x_j \in J$.    
\end{definition}
This is equivalent to the condition $J:(x_1,\ldots,x_i)^\infty=J:x_i^\infty$ for any $i$.
\begin{proposition}
For any homogeneous ideal $I$, $\gin(I)$ is always of Borel type. Moreover, $\dim(S/I)=d$ if and only if $\gin(I)$ contains pure powers of $x_1,\ldots,x_c$ but not pure powers of $x_{c+1},\ldots,x_n$, where $c=n-d$. 
\end{proposition}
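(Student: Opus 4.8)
The plan is to handle the two assertions separately, reducing both to the combinatorics of the monomial ideal $J=\gin(I)$. For the first assertion, I would invoke Galligo's theorem in the form recorded in \cite{bayer1987criterion} and \cite{herzog2011monomial}, namely that $\gin(I)$ is fixed by the Borel subgroup of upper triangular matrices in $GL_n(k)$. It then remains to check that every Borel-fixed monomial ideal is of Borel type. In characteristic zero this is immediate, since Borel-fixed is equivalent to strongly stable, which trivially satisfies the defining condition with $s=1$; in positive characteristic one uses the combinatorial base-$p$ description of Borel-fixed ideals to produce, for each $i<j$ and each $u\in J$ with $x_j\mid u$, an exponent $s$ with $x_i^s u/x_j\in J$. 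Equivalently, this is the colon formulation $J:(x_1,\dots,x_i)^\infty=J:x_i^\infty$ noted after the definition, which I would cite from \cite{herzog2011monomial}.

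For the second assertion, first note that $\dim(S/I)=\dim(S/J)$: for any $\alpha\in GL_n(k)$ the ideal $\alpha(I)$ has the same Hilbert function as $I$, and passing to the initial ideal preserves the Hilbert function, so $S/I$ and $S/J$ share a Hilbert function and hence a dimension. Let $m$ be the largest index such that $J$ contains a pure power of $x_m$ (with $m=0$ if $J$ contains no pure power at all). Using the Borel-type property just established, if $x_j^a\in J$ then applying the condition with $u=x_j^a$ and any $i<j$ repeatedly lowers the exponent of $x_j$ and eventually yields a pure power of $x_i$; hence the set of indices admitting a pure power in $J$ is the initial segment $\{1,\dots,m\}$, and in particular $(x_1,\dots,x_m)\subseteq\sqrt{J}$.

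The key step is the reverse inclusion $J\subseteq(x_1,\dots,x_m)$, i.e.\ that no minimal generator of $J$ is a monomial in $x_{m+1},\dots,x_n$ alone. Suppose such a generator $u$ existed, and let $p>m$ be the smallest index of a variable dividing $u$. I would repeatedly apply the Borel-type condition with $i=p$ and $j$ equal to the largest index still dividing the current monomial: each move replaces one factor $x_j$ (with $j>p$) by a power of $x_p$, strictly decreasing $\sum_{i>p}(\text{exponent of }x_i)$ while keeping every index at least $p$. After finitely many steps this produces a pure power $x_p^N\in J$ with $p>m$, contradicting the maximality of $m$. Hence $J\subseteq(x_1,\dots,x_m)$, so $\sqrt{J}\subseteq(x_1,\dots,x_m)$, and with the previous paragraph $\sqrt{J}=(x_1,\dots,x_m)$, a prime of height $m$. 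Therefore $c=\het(J)=m$ and $d=n-m$, so $J$ contains pure powers of exactly $x_1,\dots,x_c$, which is the claim. I expect the main obstacle to be the positive-characteristic half of the first assertion — verifying that Borel-fixed ideals are of Borel type without the strongly stable description — together with making this collapsing argument fully airtight, ensuring the Borel moves terminate at a pure power and never reintroduce a variable of index at most $m$.
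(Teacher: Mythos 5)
Your argument is correct, but there is nothing in the paper to compare it against: the author states this proposition in the preliminaries as standard background on generic initial ideals, with no proof, implicitly deferring to \cite{herzog2011monomial} and the Galligo/Bayer--Stillman theory. What you have written is therefore a self-contained verification of a fact the paper simply cites. Your reduction of the first assertion to ``Borel-fixed implies Borel type'' is the standard route, and the positive-characteristic case you flag as the main obstacle does go through: if $x_j\mid u$ and $a$ is the exponent of $x_j$ in $u$, the base-$p$ description of Borel-fixed ideals gives $(x_i/x_j)^{p^k}u\in J$ where $p^k$ corresponds to the lowest nonzero $p$-adic digit of $a$, and multiplying back by $x_j^{p^k-1}$ yields $x_i^{p^k}u/x_j\in J$, which is the Borel-type condition with $s=p^k$ (one also extends from minimal generators to arbitrary monomials of $J$ by passing to a minimal generator dividing $u$). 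Your treatment of the second assertion is also sound: preservation of the Hilbert function gives $\dim(S/I)=\dim(S/J)$; the collapsing moves show both that the indices admitting pure powers form an initial segment $\{1,\dots,m\}$ and that every minimal generator is divisible by some $x_i$ with $i\le m$; hence $\sqrt{J}=(x_1,\dots,x_m)$ is prime of height $m$ and $d=n-m$, which is exactly the stated equivalence. The termination worry you raise is already handled by your observation that each move strictly decreases $\sum_{i>p}(\text{exponent of }x_i)$ while never introducing a variable of index below $p$.
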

We also recall the definition of Hilbert coefficients taken from \cite{eisenbud2013commutative}. Let $M$ be a finitely generated graded $S$-module. The function $H_M: \mathbb{Z} \to \mathbb{N}, H_M(m)=\dim_k (M_m)$ is called the Hilbert function of $M$, and the power series $h_M(t)=\sum_{i \in \mathbb{Z}}H_M(i)t^i$ is called the Hilbert series of $M$. It is well-known that $h_M(t)=q_M(t)/(1-t)^{\dim M}$ with $q_M(t) \in \mathbb{Z}[t], q_M(1) \neq 0$.
\begin{definition}
Let $M$ be a finitely generated graded $S$-module, $q_M(t)=h_M(t)(1-t)^{\dim(M)} \in \mathbb{Z}[t]$. Expand $q_M(t)$ as a linear combination of powers of $t-1$:
$$q_M(t)=e_0+e_1(t-1)+e_2(t-1)^2+\ldots$$
The coefficient $e_i$ is called the $i$-th Hilbert coefficient of $M$.
\end{definition}
Since $q_M(t) \in \mathbb{Z}[t]$, all the Hilbert coefficients are integers.

\section{Generic initial ideal under projection and saturation}
For a monomial $u=x_1^{e_1}x_2^{e_2}\ldots x_n^{e_n}$, let $\phi_i(u)$ denote $u/x_i^{e_i}$, that is, we eliminate all $x_i$'s from the factors of $u$. Let $\Phi_r(u)=\phi_{r+1}\phi_{r+2}\ldots\phi_n(u)$. If $J=(u_1,\ldots,u_m)$ is a monomial ideal in $S$ with monomial minimal generators $u_1,\ldots,u_m$, we define $\phi_i(J)=(\phi_i(u_1),\ldots,\phi_i(u_m))=J:x_i^\infty$, $\Phi_r(J)=(\Phi_r(u_1),\ldots,\Phi_r(u_m))=J:(x_{r+1}\ldots x_n)^\infty$, $\overline{\Phi_r}(J)=\pi_{r+1}\phi_{r+1}\pi_{r+2}\phi_{r+2}\ldots\pi_n\phi_n(J)$. Note that $\Phi_r(J)$ is an $S$-ideal and $\overline{\Phi_r}(J)$ is an $S(r)$-ideal, and these two ideals are generated by the same set of monomials in $S(r)$.

\begin{remark}
If $i \neq j$, $\pi_i\phi_j=\phi_j\pi_i$. Therefore, if $J$ is generated by $u_1,\ldots,u_m$, then $\pi_i\phi_j(J)=\phi_j\pi_i(J)$ is generated by $\phi_j(u_k)$ where $u_k$ is not divisible by $x_i$. Thus in the expression of $\overline{\Phi_r}(J)$, we can interchange all $\phi$'s and $\pi$'s if we only move $\pi$ to the left; that is, $\overline{\Phi_r}(J)=\Pi_r\Phi_r(J)$.    
\end{remark}
The following two properties show that the generic initial ideal in reverse lexicographic order behaves well under the projection map and saturation:
\begin{proposition}[\cite{elias1998six}, Proposition 2.14 and \cite{bayer1987criterion}, Lemma 2.2]\label{3.2}
Let $I$ be a homogeneous ideal in $S$, $l$ be a general linear form, then $\gin(\pi_l(I))=\pi_n(\gin(I))$.    
\end{proposition}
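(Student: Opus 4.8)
The plan is to peel off all the genericity into a single generic coordinate change and then reduce the statement to one algebraic identity about how $\iin_<$ interacts with killing the last variable, the identity being where the reverse lexicographic order does the real work.

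First I would normalize the genericity. Since $\gin(\alpha(I))=\gin(I)$ for every $\alpha\in GL_n(k)$, and since the open conditions ``$l$ general'' and ``$\alpha$ general'' can be met simultaneously, I would fix a generic $\gamma\in GL_n(k)$ and set $J=\gamma(I)$, so that $\iin_<(J)=\gin(I)$. For such a generic $\gamma$ the projected ideal $\pi_n(J)=\pi_n(\gamma(I))$ is a generic projection of $I$ that is moreover in generic position in $S(n-1)$, and since $\gin$ on $S(n-1)$ is insensitive to the residual $GL_{n-1}(k)$-action, this gives $\iin_<(\pi_n(J))=\gin(\pi_n(\gamma(I)))=\gin(\pi_l(I))$. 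On the other hand $\pi_n(\gin(I))=\pi_n(\iin_<(J))$. Hence the whole proposition collapses to the single claim
$$\iin_<(\pi_n(J))=\pi_n(\iin_<(J)),$$
where $J$ is in generic coordinates and $<$ denotes the reverse lexicographic order on both $S$ and $S(n-1)$.

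For the inclusion $\pi_n(\iin_<(J))\subseteq\iin_<(\pi_n(J))$ I would argue directly. A monomial generator $m$ of $\pi_n(\iin_<(J))$ is an $x_n$-free monomial of $\iin_<(J)$, so $m=u\,\iin_<(f)=\iin_<(uf)$ for some monomial $u$ and some $f\in J$. As the leading monomial $m$ of $uf$ is free of $x_n$, it survives the substitution $x_n=0$ and remains the largest surviving monomial, whence $\iin_<(\pi_n(uf))=m$ and $m\in\iin_<(\pi_n(J))$. This direction holds for any monomial order and requires no genericity.

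The reverse inclusion is the crux, and I would obtain it by a Hilbert-function count, using that the two ideals above already satisfy $\subseteq$, so equality follows once they share a Hilbert function. From the exact sequence $0\to(J:x_n)(-1)\xrightarrow{x_n}J\to\pi_n(J)\to0$ one gets $\dim_k(\pi_n(K))_t=\dim_k K_t-\dim_k(K:x_n)_{t-1}$ for any homogeneous $K$; applying this to $K=J$ and to $K=\iin_<(J)$, and using that $\iin_<$ preserves Hilbert functions in $S$ and in $S(n-1)$, the desired equality of Hilbert functions reduces to $\dim_k(J:x_n)_{t-1}=\dim_k(\iin_<(J):x_n)_{t-1}$, that is, to
$$\iin_<(J:x_n)=\iin_<(J):x_n.$$
The containment $\iin_<(J:x_n)\subseteq\iin_<(J):x_n$ is formal, so only the reverse containment remains, and this is precisely the point at which the reverse lexicographic order is indispensable: because $x_n$ is the smallest variable and is general for $J$ by the generic-coordinate hypothesis, it behaves as a nonzerodivisor up to leading terms, and one can lift a monomial $w$ with $x_n w\in\iin_<(J)$ to an element of $J:x_n$ whose leading term is exactly $w$. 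I expect this reverse-lex colon lemma to be the main obstacle; once it is in hand, the Hilbert-function count closes the argument and the proposition follows.
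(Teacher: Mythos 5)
The paper does not prove this proposition at all: it is quoted verbatim from the literature (Green's article in the ``Six lectures'' volume and Bayer--Stillman), so there is no internal proof to compare against. Your argument is essentially the standard proof from those sources, and its skeleton is sound: reduce to generic coordinates, prove $\pi_n(\iin_<(J))\subseteq\iin_<(\pi_n(J))$ directly, and force equality by a Hilbert-function count that rests on $\iin_<(J:x_n)=\iin_<(J):x_n$. The main thing to correct is your assessment of where the difficulty and the genericity live. The colon identity you defer as ``the main obstacle'' is in fact a two-line consequence of a degree-by-degree property of the reverse lexicographic order and requires \emph{no} genericity: among monomials of a fixed degree, every $x_n$-free monomial is revlex-larger than every monomial divisible by $x_n$, so if $f$ is homogeneous and $x_n\mid\iin_<(f)$ then $x_n\mid f$; hence a monomial $x_nw\in\iin_<(J)$ equals $\iin_<(f)$ for some homogeneous $f\in J$, $f=x_nh$ with $h\in J:x_n$ and $\iin_<(h)=w$. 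Consequently your key identity $\iin_<(\pi_n(K))=\pi_n(\iin_<(K))$ holds for \emph{every} homogeneous ideal $K$ in revlex, generic or not; the genericity is consumed entirely in the reduction step, namely in identifying $\iin_<(\pi_n(\gamma(I)))$ with $\gin(\pi_l(I))$ and $\iin_<(\gamma(I))$ with $\gin(I)$ for a single $\gamma$. That reduction is the one place where you should be a little more careful: the open set of $\beta\in GL_{n-1}(k)$ computing $\gin(\pi_l(I))$ a priori depends on $l$, so ``meeting the open conditions simultaneously'' needs the standard observation that the set of good pairs $(l,\beta)$ contains a nonempty open subset of $S_1\times GL_{n-1}(k)$ and that $\gamma\mapsto(\gamma^{-1}(x_n),\tilde\gamma)$ is a surjection from the irreducible variety $GL_n(k)$, so the preimage of that open set is nonempty open. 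With those two points filled in, your proof is complete and is, for practical purposes, the proof in the cited references.
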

\begin{remark}
Here $\pi_l(I)$ is an ideal in $S(n-1)$, so the generic initial ideal is well-defined. $\pi_n(\gin(I))$ is also an ideal of $S(n-1)$. Thus this equality makes sense because it compares two ideals in the same ring $S(n-1)$.    
\end{remark}
\begin{proposition}\label{3.4}
$\gin(I^{sat})=\gin(I):x_n^\infty=\gin(I)^{sat}$.    
\end{proposition}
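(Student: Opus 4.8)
The plan is to prove the two equalities separately, since they rest on quite different ideas. The second equality $\gin(I):x_n^\infty=\gin(I)^{sat}$ is the quick one and uses only that $\gin(I)$ is of Borel type, as recorded in the proposition above. Borel type means $\gin(I):(x_1,\ldots,x_i)^\infty=\gin(I):x_i^\infty$ for every $i$; taking $i=n$ and using $(x_1,\ldots,x_n)=\mathfrak{m}$ gives $\gin(I)^{sat}=\gin(I):\mathfrak{m}^\infty=\gin(I):(x_1,\ldots,x_n)^\infty=\gin(I):x_n^\infty$. So the real content is the first equality $\gin(I^{sat})=\gin(I):x_n^\infty$.

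For that I would first reduce everything to a statement about a single ideal in generic coordinates. Saturation commutes with linear automorphisms since $\alpha(\mathfrak{m})=\mathfrak{m}$, so $\alpha(I^{sat})=(\alpha I)^{sat}$ for every $\alpha\in GL_n(k)$. I would then choose a single $\alpha$ generic enough that $\iin(\alpha I)=\gin(I)$, that $\iin(\alpha(I^{sat}))=\gin(I^{sat})$, and that the genericity condition of the next paragraph holds; this is legitimate because a finite intersection of nonempty Zariski open sets in $GL_n(k)$ is nonempty. Writing $J=\alpha I$, we then have $\gin(I)=\iin(J)$ and $\gin(I^{sat})=\iin(\alpha(I^{sat}))=\iin(J^{sat})$, so it suffices to prove, for $J$ in generic coordinates, the identity $\iin(J^{sat})=\iin(J):x_n^\infty$.

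I would split this into two independent facts. The first is that $J^{sat}=J:x_n^\infty$ whenever $J$ is in generic coordinates. Taking a primary decomposition $J=\bigcap_i Q_i$ with $\sqrt{Q_i}=\mathfrak{p}_i$, one checks $J:x_n^\infty=\bigcap_{x_n\notin\mathfrak{p}_i}Q_i$ and $J^{sat}=\bigcap_{\mathfrak{p}_i\neq\mathfrak{m}}Q_i$; since each $\mathfrak{p}_i\neq\mathfrak{m}$ is a homogeneous prime of positive dimension, the generic linear form that $x_n$ becomes after the generic coordinate change lies in no such $\mathfrak{p}_i$, so the two intersections coincide. This is the one place where genericity is genuinely used. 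The second fact is the purely order-theoretic identity $\iin(J:x_n^\infty)=\iin(J):x_n^\infty$, valid for the reverse lexicographic order and \emph{any} homogeneous $J$. Here I would use the structural feature of reverse lex that, for homogeneous $f$ of $x_n$-order $j_0$, writing $f=x_n^{j_0}g$ with $x_n\nmid g$ gives $\iin(f)=x_n^{j_0}\iin(g)$. The inclusion $\subseteq$ is then immediate from $\iin(x_n^k h)=x_n^k\iin(h)$, and for $\supseteq$, given a monomial $m$ with $x_n^k m=\iin(f)\in\iin(J)$, the factorization $f=x_n^{j_0}g$ produces $g\in J:x_n^\infty$ whose initial monomial, after adjusting by a power of $x_n$, is exactly $m$.

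Chaining the four identities gives $\gin(I^{sat})=\iin(J^{sat})=\iin(J:x_n^\infty)=\iin(J):x_n^\infty=\gin(I):x_n^\infty$, as desired. I expect the main obstacle to be the reduction bookkeeping in the second paragraph, namely pinning down a single generic $\alpha$ that simultaneously computes both gins and forces $J^{sat}=J:x_n^\infty$, together with the genericity argument that a generic hyperplane meets no positive-dimensional associated prime; the reverse-lex identity, though it is the technical heart of the statement, follows cleanly from the $x_n$-order description of initial terms and requires no genericity at all.
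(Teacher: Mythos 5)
Your proof is correct. For the second equality you use exactly the paper's argument (Borel type with $i=n$); for the first equality the paper simply cites \cite{elias1998six}, Proposition 2.21 and \cite{eisenbud2013commutative}, Proposition 15.24, and what you have written is a correct self-contained reconstruction of that standard proof: reduce to generic coordinates, show $J^{sat}=J:x_n^\infty$ via primary decomposition and the genericity of $x_n$, and then use the reverse-lex fact that for homogeneous $f$ the $x_n$-order of $\iin(f)$ equals that of $f$ to get $\iin(J:x_n^\infty)=\iin(J):x_n^\infty$. The only point to tidy up is in the $\supseteq$ direction: membership $x_n^km\in\iin(J)$ gives only that $x_n^km$ is a \emph{multiple} of some $\iin(f)$ with $f\in J$ homogeneous, not that it equals one, but comparing $x_n$-orders on both sides still exhibits $m$ as $x_n^c$ times a multiple of $\iin(g)$ with $g=f/x_n^{j_0}\in J:x_n^\infty$, so the argument goes through.
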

\begin{proof}
The first equality is proved in \cite{elias1998six}, Proposition 2.21 and \cite{eisenbud2013commutative}, Proposition 15.24. The second statement is true because $\gin(I)$ is of Borel type.    
\end{proof}
For simplicity, we will denote the saturation operation $I \to I^{sat}$ by $\sigma$.

Let $I$ be a saturated homogeneous ideal in $S$. Let $l$ be a linear form such that the $x_n$-coefficient of $l$ is nonzero. We call the ideal $\sigma\pi_l(I)=(\pi_l(I))^{sat}$ the section with one hyperplane. It is a saturated ideal in $S(n-1)$. If we have $s$ general linear forms $l_1,\ldots,l_s$, then inductively we can define the ideal of section with $s$ hyperplanes: the ideal of section with one hyperplane is $I_1=\sigma\pi_{l_1}(I)$ which is an ideal in $S(n-1)$; the section with two hyperplanes $I_2=\pi_{\overline{l_2}}(I_1)^{sat}$ where $\overline{l_2}=\pi_{l_1}(l_2)$, and so on, until we get $I_s=\pi_{\overline{l_s}}(I_{s-1})^{sat}$ which is an ideal in $S(r)$. In algebraic geometry, we can consider the algebraic set $X \subset \mathbb{P}^{n-1}$ corresponding to $I$; the intersection of $X$ with $s$ general hyperplanes defined by $l_1,\ldots,l_s$ is $X_s$, then $I_s$ is the defining ideal of $X_s$ inside the linear subvariety $\mathbb{P}^{n-s-1}$ defined by $l_1,\ldots,l_s$. The coordinate ring of the linear subvariety is $S/(l_1,\ldots,l_s)=S(r)$.
\begin{proposition}\label{3.5}
$\gin(I_s)=\sigma\pi_{r+1}\sigma\pi_{r+2}\ldots\sigma\pi_{n}(\gin(I))$.    
\end{proposition}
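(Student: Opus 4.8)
The plan is to induct on $s$, stripping off one hyperplane section at a time and using \Cref{3.2} and \Cref{3.4} to push the $\gin$ operator successively through a projection and a saturation. Since $\gin$, the projections $\pi$, and the saturation $\sigma$ all make sense in each of the rings $S(m)$, I apply the two cited propositions not only in $S=S(n)$ but in every intermediate ring $S(r+1)$; there the role of the ``last variable'' $x_n$ is played by $x_{r+1}$, so \Cref{3.2} reads $\gin(\pi_l(J))=\pi_{r+1}(\gin(J))$ for a general linear form $l$ and an ideal $J\subset S(r+1)$, while \Cref{3.4} gives $\gin(J^{sat})=\gin(J)^{sat}=\sigma\gin(J)$ there as well.

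For the base case I set $I_0:=I$ and $r=n$; then the right-hand side is the empty composition applied to $\gin(I)$, so the identity $\gin(I_0)=\gin(I)$ is a tautology. For the inductive step, fix $1\le s\le d$ and assume
$$\gin(I_{s-1})=\sigma\pi_{r+2}\sigma\pi_{r+3}\cdots\sigma\pi_{n}(\gin(I)),$$
where $I_{s-1}\subset S(r+1)$. By definition $I_s=\sigma\pi_{\overline{l_s}}(I_{s-1})$. I first commute $\gin$ past the saturation using \Cref{3.4} in $S(r)$, obtaining $\gin(I_s)=\sigma\,\gin(\pi_{\overline{l_s}}(I_{s-1}))$. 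I then apply \Cref{3.2} in $S(r+1)$, which (for general $l_s$, see below) gives $\gin(\pi_{\overline{l_s}}(I_{s-1}))=\pi_{r+1}(\gin(I_{s-1}))$. Substituting the inductive hypothesis yields $\gin(I_s)=\sigma\pi_{r+1}\sigma\pi_{r+2}\cdots\sigma\pi_{n}(\gin(I))$, closing the induction.

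The point needing the most care is the phrase ``for general $l_s$'' in the step above: \Cref{3.2} requires $\overline{l_s}$ to lie in a dense open subset $U$ of linear forms of $S(r+1)$ that depends on the fixed ideal $I_{s-1}$, and $\overline{l_s}=\Pi_{l_1\ldots l_{s-1}}(l_s)$ depends on all of $l_1,\ldots,l_s$. For fixed $l_1,\ldots,l_{s-1}$ the assignment $l_s\mapsto\overline{l_s}$ is the restriction to $S_1$ of the ring map $\Pi_{l_1\ldots l_{s-1}}$, a composite of projections each of which restricts to a surjection $S(m)_1\to S(m-1)_1$; hence it is a linear surjection $S_1\to S(r+1)_1$, and the preimage of $U$ is again dense open in $S_1$. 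Together with the finitely many dense open conditions on $l_1,\ldots,l_{s-1}$ inherited from the inductive hypothesis, all genericity requirements are Zariski-open and can be imposed simultaneously, so the identity holds for a general tuple $(l_1,\ldots,l_s)$. I would remark, finally, that the right-hand side does not involve the $l_i$ at all, so the same computation shows that $\gin(I_s)$ is independent of the choice of general hyperplanes.
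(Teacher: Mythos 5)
Your proof is correct and follows the same route as the paper, which simply says to apply \Cref{3.2} and \Cref{3.4} inductively; you have just spelled out the induction and the (valid) genericity bookkeeping for $\overline{l_s}$ that the paper leaves implicit.
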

\begin{proof}
Apply \Cref{3.2} and \Cref{3.4} inductively.    
\end{proof}
\begin{lemma}
Let $J$ be a monomial ideal of Borel type and $1 \leq r \leq n$. Then $\Pi_r(J)$, $\Phi_r(J)$, $\overline{\Phi_r}(J)$ are all of Borel type.    
\end{lemma}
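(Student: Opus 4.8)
The plan is to verify the Borel-type property for each of the three operations separately, using the characterization $J:(x_1,\ldots,x_i)^\infty = J:x_i^\infty$ for all $i$ (and the combinatorial definition where convenient), and then to obtain $\overline{\Phi_r}(J)$ essentially for free from the remark preceding \Cref{3.2}, which identifies it with the composite $\Pi_r\Phi_r(J)$.

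For $\Phi_r(J)=J:(x_{r+1}\cdots x_n)^\infty$ I would argue purely through saturations. Writing $y=x_{r+1}\cdots x_n$, the key algebraic input is that iterated saturations commute and combine: for ideals $A,B$ one has $(J:A^\infty):B^\infty = J:(AB)^\infty$, and since $AB=BA$ this quantity is symmetric in $A$ and $B$. Applying this with $A=(y)$ and $B=(x_1,\ldots,x_i)$ gives
$$\Phi_r(J):(x_1,\ldots,x_i)^\infty = \big(J:(x_1,\ldots,x_i)^\infty\big):y^\infty = \big(J:x_i^\infty\big):y^\infty = \Phi_r(J):x_i^\infty,$$
where the middle equality is the Borel-type hypothesis on $J$. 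Thus $\Phi_r(J)$ is of Borel type. Alternatively one can check the generator-level definition directly, splitting into the cases $j\le r$ and $j>r$, but the saturation identity avoids that case analysis.

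For $\Pi_r(J)$, which is an ideal of $S(r)$, I would use the combinatorial definition together with one membership observation: for a monomial $v\in S(r)$ one has $v\in\Pi_r(J)$ if and only if $v\in J$. Indeed $\Pi_r$ fixes monomials supported on $x_1,\ldots,x_r$ and kills the rest, so $\Pi_r(J)$ is generated by $\{u\in G(J): u\in S(r)\}$, and a monomial $v\in S(r)$ is divisible by such a generator exactly when it is divisible by a generator of $J$. Granting this, take $i<j\le r$ and a monomial $v\in\Pi_r(J)$ with $x_j\mid v$; then $v\in J$, the Borel-type property of $J$ produces $s$ with $x_i^s v/x_j\in J$, and since this monomial again lies in $S(r)$ the membership observation returns $x_i^s v/x_j\in\Pi_r(J)$. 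Hence $\Pi_r(J)$ is of Borel type. The same argument, applied verbatim to the monomial ideal $\Phi_r(J)$ in place of $J$, shows that $\Pi_r$ sends Borel-type ideals of $S$ to Borel-type ideals of $S(r)$; combined with the identity $\overline{\Phi_r}(J)=\Pi_r\Phi_r(J)$ and the first step, this yields that $\overline{\Phi_r}(J)$ is of Borel type.

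The routine points are the two bookkeeping facts — that saturations commute and that $\Pi_r$-membership of a monomial of $S(r)$ coincides with $J$-membership — and once these are in place all three claims follow with no further work. The only step demanding care is the saturation step for $\Phi_r$: I must make sure the identity $(J:A^\infty):B^\infty=J:(AB)^\infty$ is applied to the correct ideals, the principal $(y)$ versus the non-principal $(x_1,\ldots,x_i)$, and that the symmetry in $A$ and $B$ is genuinely what converts the projection condition into the single-variable condition. I expect this to be the main, though modest, obstacle; the projection statement for $\Pi_r$ is essentially formal.
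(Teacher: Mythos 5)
Your argument is correct, and for the step concerning $\Phi_r(J)$ it takes a genuinely different route from the paper. The paper works entirely at the level of monomial minimal generators: a minimal generator of $\Phi_r(J)$ is $\Phi_r(u_k)$ for some minimal generator $u_k$ of $J$, the relevant variable $x_i$ is forced to satisfy $i\le r$, and the Borel-type property of $J$ is transported through $\Phi_r$ by a divisibility computation; the same template is then repeated for $\Pi_r(J)$. You instead invoke the ideal-quotient characterization of Borel type, $J:(x_1,\ldots,x_i)^\infty=J:x_i^\infty$ (which the paper records immediately after the definition, so you are entitled to it), together with the identity $(J:A^\infty):B^\infty=J:(AB)^\infty$ and its symmetry in $A$ and $B$; this is a clean, correct way to see that $\Phi_r(J)=J:(x_{r+1}\cdots x_n)^\infty$ inherits the Borel property without any case analysis on generators, at the modest cost of verifying the commutation identity (which does hold for arbitrary ideals, by taking a common exponent over the finitely many generators of $B^m$). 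Your treatment of $\Pi_r(J)$ --- via the observation that a monomial of $S(r)$ lies in $\Pi_r(J)$ iff it lies in $J$ --- is essentially the paper's generator-level argument in membership form, and your reduction of $\overline{\Phi_r}(J)$ to the composite $\Pi_r\Phi_r$ is exactly what the paper does. In short: the saturation-algebra argument buys you a shorter, more conceptual proof of the $\Phi_r$ step, while the paper's generator computation is more self-contained and makes explicit which generators of $\Phi_r(J)$ witness the Borel exchanges; both are complete.
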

\begin{proof}
Suppose $J=(u_1,\ldots,u_m)$, then $\Phi_r(J)=(\Phi_r(u_1),\ldots,\Phi_r(u_m))$. Every minimal generator of $\Phi_r(J)$ is of the form $\Phi_r(u_k)$ for some $k$. Choose $i$ such that $x_i|\Phi_r(u_k)$ and $j<i$. Then $i \leq r$ because $\Phi_r(u_k)$ is only divisible by a subset of $\{x_1,\ldots,x_r\}$. In this case $x_i|u_k$. Since $J$ is of Borel type, there exists $t$ such that $x_j^tu_k/x_i \in J$, so there is another minimal generator $u_{k'}|x_j^tu_k/x_i$. Since $j<i\leq r$, $\Phi_r(u_{k'})|\Phi_r(x_j^tu_k/x_i)=x_j\Phi_r(u_k)/x_i$. This means $\Phi_r(J)$ is still of Borel type. For $\Pi_r$, we have $\Pi_r(J)=(\Pi_r(u_1),\ldots,\Pi_r(u_m))$, and the set of minimal generators of $\Pi_r(J)$ is just the set of $\Pi_r(u_k)=u_k$'s where $\Pi_r(u_k) \neq 0$. Choose $i$ such that $x_i|\Pi_r(u_k)$ and $j<i$, then $i<r$ because $\Pi_r(u_k) \neq 0$ is only divisible by a subset of $\{x_1,\ldots,x_r\}$. Then we can use the same argument as $\Phi_r(J)$ to prove $\Pi_r(J)$ is of Borel type. The last statement is true by the previous two statements because $\overline{\Phi_r}=\Pi_r\Phi_r$.   
\end{proof}
\begin{proposition}\label{3.7}
Let $J$ be a saturated monomial ideal of Borel type of $S$. Then
\begin{enumerate}
\item $\pi_{r+1}\sigma\pi_{r+2}\ldots\sigma\pi_n(J)=\pi_{r+1}\phi_{r+1}\ldots\pi_n\phi_n(J)=\overline{\Phi_r}(J)$.
\item $\sigma\pi_{r+1}\sigma\pi_{r+2}\ldots\sigma\pi_n(J)=\phi_r\pi_{r+1}\phi_{r+1}\ldots\pi_n\phi_n(J)=\phi_r\overline{\Phi_r}(J)$.
\end{enumerate}
\end{proposition}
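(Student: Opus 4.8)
The plan is to reduce everything to a single observation: \emph{for a monomial ideal $K$ of Borel type in $S(m)$, saturation coincides with elimination of the top variable}. Indeed,
\[
\sigma(K)=K:\mathfrak{m}(m)^\infty=K:(x_1,\ldots,x_m)^\infty=K:x_m^\infty=\phi_m(K),
\]
where the penultimate equality is exactly the Borel type condition (with $i=m$). Moreover $\sigma(K)=\phi_m(K)=\Phi_{m-1}(K)$ is again of Borel type by the preceding Lemma applied in the ring $S(m)$. Thus every $\sigma$ occurring in the two formulas may be traded for a suitable $\phi$, \emph{provided} the ideal it acts on is of Borel type in the correct ring. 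In particular, since $J$ is saturated, the case $m=n$ gives $\phi_n(J)=\sigma(J)=J$.

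Next I would set up a downward induction on the number of projections. For $r\le i\le n$ define the $S(i)$-ideal $M_i=\sigma\pi_{i+1}\sigma\pi_{i+2}\cdots\sigma\pi_n(J)$, so that $M_n=J$ and $M_r$ is the left-hand side of (2). I claim that each $M_i$ is a saturated ideal of Borel type in $S(i)$ with
\[
M_i=\phi_i\,\overline{\Phi_i}(J).
\]
The base case $i=n$ holds because $\overline{\Phi_n}(J)=J$ (the empty composition) and $\phi_n(J)=J$ as just noted, while $J$ is saturated of Borel type by hypothesis. For the inductive step, assume the claim for $M_i$. Since $M_i$ is of Borel type in $S(i)$, the preceding Lemma (with $S(i)$ in place of $S$) shows that $\pi_i(M_i)=\Pi_{i-1}(M_i)$ is of Borel type in $S(i-1)$; hence the key observation gives
\[
M_{i-1}=\sigma\pi_i(M_i)=\phi_{i-1}\pi_i(M_i),
\]
an ideal that is saturated and again of Borel type. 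Substituting $M_i=\phi_i\overline{\Phi_i}(J)$ and unravelling the definition $\overline{\Phi_{i-1}}(J)=\pi_i\phi_i\,\overline{\Phi_i}(J)$ yields $M_{i-1}=\phi_{i-1}\pi_i\phi_i\,\overline{\Phi_i}(J)=\phi_{i-1}\overline{\Phi_{i-1}}(J)$, completing the induction.

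Finally, taking $i=r$ gives $M_r=\phi_r\overline{\Phi_r}(J)$, which is precisely statement (2); the middle expression there is just the definition of $\phi_r\overline{\Phi_r}(J)$. For statement (1) I would apply $\pi_{r+1}$ to $M_{r+1}=\phi_{r+1}\overline{\Phi_{r+1}}(J)$, so that the left-hand side equals $\pi_{r+1}(M_{r+1})=\pi_{r+1}\phi_{r+1}\overline{\Phi_{r+1}}(J)=\overline{\Phi_r}(J)$, its middle expression again being the definition of $\overline{\Phi_r}(J)$. The only point requiring genuine care is maintaining the Borel type property through the alternation of projections and saturations, so that the key observation can legitimately be invoked in each ring $S(i)$; this is exactly what the preceding Lemma secures, since it shows that both $\pi_i$ and $\phi_i$ (equivalently $\sigma$) preserve Borel type. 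I expect this bookkeeping, rather than any single computation, to be the main obstacle.
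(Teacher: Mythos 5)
Your proof is correct and follows essentially the same route as the paper: both arguments rest on the observation that for a Borel-type monomial ideal saturation equals colon with the last variable ($\sigma=\phi_m$ in $S(m)$), and both run a downward induction that alternates projections with saturations, using the preceding Lemma to keep the Borel-type property at each stage. The paper merely phrases the induction as ``(2) for $r$ implies (1) for $r-1$'' and ``(1) for $r$ implies (2) for $r$,'' which is the same bookkeeping you carry out via the ideals $M_i$.
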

\begin{proof}
$J$ is of Borel type and saturated, so $J=J^{sat}=J:x_n^\infty=\phi_n(J)$ and $\pi_n(J)=\pi_n\phi_n(J)$, which means (1) is true for $r=n-1$ and (2) is true for $r=n$. It is obvious that (2) for $r$ implies (1) for $r-1$, so by induction it suffices to show (1) for $r$ implies (2) for $r$. By lemma before we see $\overline{\Phi_r}(J)$ is of Borel type, so if (1) is true for $r$, then $\sigma\pi_{r+1}\sigma\pi_{r+2}\ldots\sigma\pi_n(J)=(\pi_{r+1}\sigma\pi_{r+2}\ldots\sigma\pi_n(J))^{sat}=(\overline{\Phi_r}(J))^{sat}=\phi_r(\overline{\Phi_r}(J))$, so (2) is true for $r$ and we are done.    
\end{proof}
The above proposition means that for monomial ideals of Borel type, we can replace the saturation operation $\sigma$ by taking saturation with respect to a single variable $\phi_i$.

\section{Initial and generic initial ideal of prime ideals}
This section describes restrictions on the generic initial ideal of a homogeneous prime ideal inside $S$.
\begin{theorem}\label{4.1}
Let $P$ be a homogeneous prime ideal in $S$ and $J=\iin(P)$. Assume $J$ is of Borel type, and $\overline{\Phi_r}(J)=\Pi_r(J)+u$ for some $1 \leq r \leq n-1$ and $u \in \Pi_r(J):\mathfrak{m}(r)$. Then:
\begin{enumerate}
\item Either $u=1$, $(x_1,\ldots,x_r) \subset J$, or $J$ is generated by $J\cap k[x_1,\ldots,x_r]$ and one extra generator $v$.
\item If $u \neq 1$, then the extra generator $v=ux_{r+1}^e$ for some $e>0$.
\item If $u \neq 1$, then $J$ is generated by monomials inside $k[x_1,\ldots,x_{r+1}]$.
\end{enumerate}
\end{theorem}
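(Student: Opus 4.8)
The plan is to first extract the combinatorial skeleton of the hypothesis and then pin down the exceptional structure using primality, which I expect to be the genuine obstacle. Write $G(J)=G_0\sqcup G_1$, where $G_0=G(J)\cap k[x_1,\ldots,x_r]$ and $G_1$ consists of the minimal generators involving at least one of $x_{r+1},\ldots,x_n$, so that $\Pi_r(J)=(G_0)$ and $\overline{\Phi_r}(J)=(G_0\cup\{\Phi_r(w):w\in G_1\})$. The hypothesis (with $u\notin\Pi_r(J)$) says $\dim_k(\overline{\Phi_r}(J)/\Pi_r(J))=1$ and that $u$ is the \emph{only} monomial of $\overline{\Phi_r}(J)\setminus\Pi_r(J)$, since any monomial properly divisible by $u$ already lies in $\mathfrak m(r)u\subseteq\Pi_r(J)$. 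Hence for each $w\in G_1$ the monomial $\Phi_r(w)$ lies in $\Pi_r(J)$ or equals $u$; the first case is impossible, because a generator $g\in G_0$ dividing $\Phi_r(w)$ would divide $w$ and contradict minimality. So every $w\in G_1$ has the form $w=u\,m_w$ with $1\neq m_w\in k[x_{r+1},\ldots,x_n]$, and the $m_w$ form an antichain.

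I would then study the late-variable ideal $M:=(m_w:w\in G_1)\subseteq k[x_{r+1},\ldots,x_n]$ and show it is of Borel type: if $x_q\mid m_w$ with $q>r+1$, Borel type of $J$ gives $x_{r+1}^{s}\,u\,m_w/x_q\in J$ for some $s$; this monomial has $\Phi_r$-image $u$, so it is divisible by some $u\,m_{w'}$ (never by a $g\in G_0$, as that would force $u\in\Pi_r(J)$), whence $m_{w'}\mid x_{r+1}^{s}m_w/x_q$. If $u=1$ we are in case (1a). If $u\neq1$, then $(x_1,\ldots,x_r)\subseteq J$ is impossible, for it forces $\Pi_r(J)=\mathfrak m(r)$ and hence $u\in\Pi_r(J)$ once $\deg u\geq1$; so the entire theorem collapses to the single assertion that $M$ is the principal ideal $(x_{r+1}^{e})$, which at once yields the lone extra generator $v=u\,x_{r+1}^{e}$ of case (1c), statement (2), and statement (3).

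The combinatorial hypotheses do not force $M=(x_{r+1}^{e})$ by themselves: for instance $J=(x_1^2,x_1x_2,x_1x_3)$ in $k[x_1,x_2,x_3]$ is of Borel type and satisfies the hypothesis with $r=1,\ u=x_1$, yet has $M=(x_2,x_3)$ — it simply is not the initial ideal of any prime, since its Hilbert series forces a degree and dimension incompatible with a domain. So primality must enter here, and this is the hard part. I would route it through the section interpretation of \Cref{3.5} and \Cref{3.7} together with the depth formula $\depth(S/J)=n-\max\{i:x_i\mid w,\ w\in G(J)\}$ for Borel-type $J$: here $\Pi_r(J)$ and $\overline{\Phi_r}(J)$ compute the non-saturated and saturated sections of $S/P$ by $s=n-r$ general hyperplanes, so $\dim_k(\overline{\Phi_r}(J)/\Pi_r(J))=1$ means the section has a length-one saturation defect, i.e. $\dim_k H^0_{\mathfrak m}=1$. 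Irreducibility of $V(P)$ makes the general hyperplanes cut the dimension by one at each step, so this defect first appears exactly at $s=\depth(S/P)+1$ and pins $\depth(S/P)=s-1$; translating back, the largest variable dividing a generator of $J$ is $x_{r+1}$, that is $M\subseteq k[x_{r+1}]$, whence $M=(x_{r+1}^{e})$ because a Borel-type ideal in one variable is a pure power.

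The step I expect to resist is this last one: controlling the \emph{top} variable occurring in $G_1$ (equivalently $\depth(S/J)$) rather than merely the height of $M$. Dimension counts alone pin down $\het(M)$ but still allow ideals such as $(x_{r+1}^2,x_{r+1}x_{r+2})$, whose radical is $(x_{r+1})$ yet whose top variable is $x_{r+2}$; excluding these seems to require that passing to $\iin(P)$ does not drop the depth below $n-(r+1)$, which is precisely where the irreducibility (minimal, length-one socle) consequence of primality must be invoked, and where I would spend the most care.
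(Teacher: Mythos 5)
Your combinatorial reduction is correct and matches the first half of the paper's argument: every minimal generator outside $k[x_1,\ldots,x_r]$ must have $\Phi_r$-image exactly $u$ (not merely in $\Pi_r(J)$, by minimality), hence has the form $u\,m_w$ with $m_w\in k[x_{r+1},\ldots,x_n]$, and the whole theorem reduces to showing there is exactly one such generator and that $m_w$ is a pure power of $x_{r+1}$. Your example $(x_1^2,x_1x_2,x_1x_3)$ correctly isolates where primality must enter. But that is precisely the step you leave unproved, and your proposed route for it does not work. The claim that a ``length-one saturation defect'' at level $s$ pins $\depth(S/P)=s-1$ \emph{exactly} is not a citable fact --- it is equivalent to the statement being proved, and in the paper's logical order it is a \emph{consequence} of \Cref{4.1} (this is \Cref{5.4}, whose proof invokes \Cref{4.1}), so routing the proof of \Cref{4.1} through it is circular. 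The defect being nonzero only gives the easy inequality $\depth(S/P)\leq s-1$ (equivalently $\het(M)\geq 1$), which needs no primality; the hard direction is ruling out generators involving $x_{r+2},\ldots,x_n$. Moreover, \Cref{4.1} is stated for $\iin(P)$ in fixed, possibly non-generic coordinates, so the general-hyperplane-section interpretation via \Cref{3.5} and \Cref{3.7} is not even available at this stage (it becomes available only in \Cref{4.2}, for $\gin$).

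The paper's mechanism is entirely different and is worth recording: supposing two extra generators $v_1=uw_1$, $v_2=uw_2$, lift them to elements $f_1,f_2$ of a reduced Gr\"obner basis of $P$; such elements are irreducible because $P$ is prime and their initial terms are \emph{minimal} generators of $J$. Writing $f_i=up_i+q_i$ (collecting the terms divisible by $u$), one shows $p_i\in k[x_{r+1},\ldots,x_n]$ using $u\in\Pi_r(J):\mathfrak m(r)$, and then that every term of $F=p_2f_1-p_1f_2=p_2q_1-p_1q_2$ is a product $m_1m_2$ with $m_1\in k[x_{r+1},\ldots,x_n]$ and $m_2\notin\Phi_r(J)=J:(x_{r+1}\cdots x_n)^\infty$, hence lies outside $J=\iin(P)$. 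Since $F\in P$, this forces $F=0$, so $p_2f_1=p_1f_2$; irreducibility of $f_1$ then yields either $u=1$ (the degenerate case of (1)) or $f_1\mid f_2$, contradicting $v_1\neq v_2$. This explicit syzygy-plus-irreducibility argument is the missing idea in your proposal; without it, or a genuinely independent proof of the depth claim, the proof is incomplete at its essential point.
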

\begin{proof}
Let $f \in P$ such that $\iin(f)$ is a minimal generator of $J$. We claim $f$ is an irreducible polynomial. If $f$ is not irreducible, then $f=f_1f_2$, $f_1,f_2$ are not constants. Then $\iin(f)=\iin(f_1)\iin(f_2)$ with $\iin(f) \neq \iin(f_1)$ or $\iin(f_2)$. Since $P$ is prime and $f \in P$, $f_1 \in P$ or $f_2 \in P$, which implies $\iin(f_1) \in J$ or $\iin(f_2) \in J$, which contradicts the minimality of $\iin(f)$, so $f$ is irreducible. Let $u_1,\ldots,u_m,v_1,\ldots,v_t$ be the monomial minimal generator of $J$ with $u_i \in k[x_1,\ldots,x_r]$, $v_j \notin k[x_1,\ldots,x_r]$. We write $v_j=w_jz_j$ where $w_j \in k[x_{r+1},\ldots,x_n]$, $z_j \in k[x_1,\ldots,x_r]$. Then $\Phi_r(u_i)=\Pi_r(u_i)=u_i$ and $\Phi_r(v_j)=z_j$, $\Pi_r(v_j)=0$. We have $\Pi_r(J)=(u_i)$, $\Phi_r(J)=(u_i,u)=(u_i,z_j)$. We claim $z_j=u$ for all $j$. Otherwise $z_j \neq u$, then $z_j \in \mathfrak{m}u \subset (u_i)$, so there exists $i$ such that $u_i|z_j$, so $u_i|v_j$, which contradicts the fact that $u_i,v_j$ are two minimal generators. So $z_j=u$ for all $j$, and $v_j=uw_j$. We have seen the number of $v_j$'s, $t \geq 1$, since if there are no $v_j$'s then $\overline{\Phi_r}(J)=\Pi_r(J)$ which contradicts our assumption. If $t=1$, then the second part of (1) is satisfied. Now we assume $t \geq 2$, so there are at least two $v_j$'s, say $v_1$ and $v_2$. Take $f_1,f_2 \in P$ such that $\iin(f_1)=v_1$, $\iin(f_2)=v_2$. We may assume $f_1,f_2$ come from a reduced Gr\"obner basis, that is, any monomial appearing in $f_1$ or $f_2$ is not in $J$ except for their initial terms. By the previous arguments, $f_1,f_2$ are irreducible. We write $f_1=up_1+q_1$ and $f_2=up_2+q_2$ such that $up_1$ is the sum of all terms appearing in $f_1$ that are divisible by $u$, and $q_1$ is the sum of all other terms; similarly for $q_2,p_2$. Then $\iin(p_1)=w_1$, $\iin(p_2)=w_2$. Besides, $u \in (u_1,\ldots,u_s):\mathfrak{m}(r)$, so any other monomial $m$ appearing in $p_1$ is not divisible by $x_1,\ldots,x_r$, otherwise $mu$ divides some $u_i \in J$, so $mu$ cannot appear as a non-initial term of $uq_1$ or $f_1$. This means $p_1 \in k[x_{r+1},\ldots,x_n]$. In the same way we can prove $p_2 \in k[x_{r+1},\ldots,x_n]$. Consider the polynomial $F=p_2f_1-p_1f_2=p_2q_1-p_1q_2$. Take any term $m_1$ of $p_2$ and $m_2$ of $q_1$. Then $m_1 \in k[x_{r+1},\ldots,x_n]$. We also have $m_2\notin (u_1,\ldots,u_m)$ since $f_2$ is part of a reduced Gr\"obner basis, and $m_2 \notin (u)$ by definition of $q_1$. So $m_2 \notin (u_1,\ldots,u_m,u)=\Phi_r(J)=J:(x_{r+1}\ldots x_n)^\infty$. So $m_1m_2 \notin J$. This is true for any choice of $m_1m_2$. Similarly the product of any term of $p_1$ and any term of $q_2$ does not lie in $J$. So any possible term of $F$ does not lie in $J=\iin(P)$. But $F \in P$, so $F=0$ and $p_2f_1=p_1f_2$. Since $f_1$ is irreducible, we have $f_1|p_1$ or $f_1|f_2$. If $f_1|p_1$ then $\iin(f)=v_1=uw_1|w_1=\iin(p_1)$, which means $f_1$ and $p_1$ differ by a constant and $u=1 \in \Pi_r(J):\mathfrak{m}(r)$, so $(x_1,\ldots,x_r) \subset J$ and the first part of (1) holds. Similarly if $f_2|p_2$ the first part of (1) also holds. If both of the above are false, then $f_1|f_2$ and $f_2|f_1$, so $f_1$ and $f_2$ differ by a constant, so they have the same initial, which is a contradiction because we assume $v_1 \neq v_2$. So (1) is proved. Suppose (2) is false, and $v=uw \in J$ where $w$ is not a pure power of $x_{r+1}$. Then $x_j|w$ for some $j \geq r+1$. Since $J$ is of Borel type, there exists $e>0$ such that $v'=uwx^e_{r+1}/x_j \in J$. Pick a monomial minimal generator $v''$ dividing $v'$. $v''$ does not divide $v$ because $v'$ does not divide $v$, and $v$ does not divide $v''$ by minimality of $v$. Then $v=uw$, $v''=uw''$, $w,w'' \in k[x_{r+1},\ldots,x_n]$, and $w,w''$ do not divide each other which is contradictory to the second part of (1), so (2) is true. (3) is a corollary of (1) and (2). 
\end{proof}
\begin{theorem}\label{4.2}
Let $P$ be a homogeneous prime ideal in $S$ and $J=\gin(P)$. Assume $\overline{\Phi_r}(J)=\Pi_r(J)+u$ for some $u \in \Pi_r(J):\mathfrak{m}(r)$ and some $1 \leq r \leq n-1$. Then:
\begin{enumerate}
\item Either $u=1$, $(x_1,\ldots,x_r) \subset J$, or $J$ is generated by $J\cap k[x_1,\ldots,x_r]$ and one extra generator $v$.
\item If $u \neq 1$, then the extra generator $v=ux_{r+1}^e$ for some $e>0$.
\item If $u \neq 1$, then $J$ is generated by monomials inside $k[x_1,\ldots,x_{r+1}]$.
\end{enumerate}
\end{theorem}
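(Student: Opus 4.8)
The plan is to reduce \Cref{4.2} to \Cref{4.1} by realizing the generic initial ideal as an ordinary initial ideal of a linearly transformed copy of $P$. By the defining property of the generic initial ideal, there is a nonempty Zariski open set $U \subset GL_n(k)$ such that $\gin(P) = \iin(\alpha(P))$ for every $\alpha \in U$. I would fix any such $\alpha$ and set $P' = \alpha(P)$. The two structural observations that make the reduction work are, first, that $P'$ is again a homogeneous prime ideal, and second, that $J = \gin(P)$ is automatically of Borel type, so the extra hypothesis that \Cref{4.1} imposes on the initial ideal is satisfied for free here.

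First I would check that $P' = \alpha(P)$ is prime. Since every $\alpha \in GL_n(k)$ induces a graded $k$-algebra automorphism of $S$, it carries homogeneous prime ideals to homogeneous prime ideals; in particular $P'$ is a homogeneous prime ideal of $S$. Next I would invoke the earlier proposition stating that $\gin$ of any homogeneous ideal is of Borel type, which gives that $J = \gin(P) = \iin(P')$ is of Borel type. Thus the ideal $J$ under consideration is precisely the initial ideal of the prime $P'$, and it satisfies the Borel-type hypothesis required by \Cref{4.1}.

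Finally, the remaining hypothesis of \Cref{4.2} --- namely $\overline{\Phi_r}(J) = \Pi_r(J) + u$ for some $u \in \Pi_r(J):\mathfrak{m}(r)$ and some $1 \leq r \leq n-1$ --- is word-for-word the hypothesis of \Cref{4.1} applied to $P'$ with $\iin(P') = J$. Hence \Cref{4.1} applies verbatim to $P'$ and yields conclusions (1), (2), and (3) for $J$, which are exactly the conclusions asserted in \Cref{4.2}. I do not expect a genuine obstacle: the entire content of \Cref{4.2} beyond \Cref{4.1} is the observation that passing to a generic coordinate system preserves primality while rendering the Borel-type assumption automatic. The only point I would state carefully is that $\iin(\alpha(P))$ really equals $\gin(P)$ for the chosen $\alpha$, which is simply the defining property of the generic initial ideal recorded earlier in the paper.
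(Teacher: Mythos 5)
Your proposal is correct and follows exactly the paper's own argument: pick $\alpha$ in the defining open set so that $J=\iin(\alpha(P))$, note that $\alpha(P)$ is still a homogeneous prime and that $\gin(P)$ is automatically of Borel type, and then apply \Cref{4.1}. Nothing further is needed.
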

\begin{proof}
The generic initial ideal of any ideal is always of Borel type. By definition of the generic initial ideal, $J=\iin(\alpha(P))$ for some $\alpha \in GL_n(k)$ and $\alpha(P)$ is still a prime ideal in $S$, so the conclusion follows from \Cref{4.1}.    
\end{proof}
\begin{example}\label{4.3}
The above theorem and corollary rule out some monomial ideals as the initial ideal of a prime ideal. For example, let $J=(x_1^2,x_1x_2,x_2^2,x_1x_3,x_1x_4) \subset k[x_1,\ldots,x_n], n \geq 4$. Then $J$ is of Borel type. Let $r=2$, we see $\Phi_r(J)=(x_1,x_2)^2$, $\Pi_r(J)=(x_1,x_2^2)$, and $x_1 \in (x_1,x_2)^2:x_1$. However, $x_1 \neq 1$, and $J$ has a minimal generator that can be divided by $x_4$. So $J$ violates conclusion (1) of \Cref{4.1}, and it cannot be the initial ideal or generic initial ideal of any prime ideal.    
\end{example}

\section{Difference in the Hilbert coefficients}
The previous section talks about restrictions on $\gin(P)$ for a prime ideal $P$. However, the generic initial ideal of an ideal is hard to capture in practice as its computation requires the information of some unknown $\alpha \in GL_n(k)$. It would be easier to describe the restrictions using Hilbert coefficients which is totally computable from the Hilbert function of the quotient ring. We want to see how the Hilbert coefficients change after going modulo $s$ general linear forms.

\begin{lemma}\label{5.1}
Let $M,N$ be two graded modules over $S$ with $\dim M=\dim N=m$. Assume there is an exact sequence
$$0 \to M_1 \to N \to M \to M_2 \to 0$$
Let $\dim(M_1)=d_1$, $\dim(M_2)=d_2$, $D=\max\{d_1,d_2\}$. Then:
\begin{enumerate}
\item $D \leq m$.
\item If $i<m-D$, then $e_i(N)=e_i(M)$.
\item If $i=m-D$, then $e_i(N)=e_i(M)+(-1)^i\delta_{D,d_1}e_0(M_1)-(-1)^i\delta_{D,d_2}e_0(M_2)$.
\end{enumerate}
\end{lemma}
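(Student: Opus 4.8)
The plan is to reduce the four-term exact sequence to a clean additive identity among Hilbert series, convert this into a polynomial identity among the numerator polynomials $q_N, q_M, q_{M_1}, q_{M_2}$, and then read off the Hilbert coefficients by expanding everything around $t=1$.

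First I would split the exact sequence. Setting $K=\operatorname{im}(N \to M)=\ker(M \to M_2)$, the given sequence breaks into two short exact sequences $0 \to M_1 \to N \to K \to 0$ and $0 \to K \to M \to M_2 \to 0$. Additivity of the Hilbert series on short exact sequences gives $h_N=h_{M_1}+h_K$ and $h_M=h_K+h_{M_2}$, and subtracting eliminates the unknown middle term $K$ to yield
$$h_N(t)=h_M(t)+h_{M_1}(t)-h_{M_2}(t).$$
Next I would establish the dimension inequalities that power part (1): since $M_1$ injects into $N$ and $M_2$ is a quotient of $M$, we have $d_1=\dim M_1 \le \dim N=m$ and $d_2=\dim M_2 \le \dim M=m$, whence $D \le m$. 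These inequalities are exactly what is needed to clear denominators. Writing $h_M=q_M/(1-t)^m$ and similarly for the others, and multiplying through by $(1-t)^m$, the identity becomes the \emph{polynomial} identity
$$q_N(t)=q_M(t)+(1-t)^{m-d_1}q_{M_1}(t)-(1-t)^{m-d_2}q_{M_2}(t),$$
in which the nonnegativity of the exponents $m-d_1$ and $m-d_2$ (i.e. part (1)) guarantees there are no residual denominators.

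Finally I would expand around $t=1$. Substituting $s=t-1$, so that $(1-t)^{m-d_j}=(-1)^{m-d_j}s^{m-d_j}$, and writing each numerator as its Taylor series $\sum_i e_i(\cdot)\,s^i$, comparison of the coefficient of $s^i$ on both sides gives
$$e_i(N)=e_i(M)+(-1)^{m-d_1}[\,i \ge m-d_1\,]\,e_{i-(m-d_1)}(M_1)-(-1)^{m-d_2}[\,i \ge m-d_2\,]\,e_{i-(m-d_2)}(M_2).$$
For $i<m-D=\min\{m-d_1,m-d_2\}$ both bracketed conditions fail, which is (2). For $i=m-D$, the condition $i \ge m-d_j$ reads $d_j \ge D$, which (since $d_j \le D$ always) holds precisely when $d_j=D$, so the bracket equals $\delta_{D,d_j}$; in that case the index $i-(m-d_j)$ equals $0$ and $(-1)^{m-d_j}=(-1)^i$, yielding exactly the formula in (3).

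The step demanding the most care is the passage from the rational identity in Hilbert series to the polynomial identity in the $q$'s: this is legitimate only because $m-d_1,m-d_2 \ge 0$, which is precisely why part (1) must be secured first. I also use that $e_0(M_1)=q_{M_1}(1)$ and $e_0(M_2)=q_{M_2}(1)$ are the multiplicities, consistent with the standing convention $q(1)\neq 0$, so that the correction terms in (3) are the genuine leading coefficients of $M_1$ and $M_2$.
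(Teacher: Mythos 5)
Your proof is correct and follows essentially the same route as the paper: split the four-term sequence to get $h_N=h_M+h_{M_1}-h_{M_2}$, use $d_1\le m$ and $d_2\le m$ to clear denominators into a polynomial identity in the $q$'s, and compare coefficients in the expansion around $t=1$. You merely make explicit some details (the splitting via the image $K$, the bookkeeping of signs and indices) that the paper leaves to the reader.
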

\begin{proof}
We have $\dim(M) \geq \dim(M_2)$ and $\dim(N) \geq \dim(M_1)$, so $D \leq m$. The Hilbert series is additive on short exact sequences, so $h_N(t)=h_M(t)+h_{M_1}(t)-h_{M_2}(t)$. Multiply both sides by $(1-t)^m$, we get $q_N(t)=q_M(t)+q_{M_1}(t)(1-t)^{m-d_1}-q_{M_2}(t)(1-t)^{m-d_2}$. Now we expand both sides in terms of powers of $t-1$ and look at the coefficients of $1,t-1,\ldots,(t-1)^{m-D}$.    
\end{proof}
Now let $J$ be a monomial idea of Borel type in $S$, $\dim(S/J)=d$, and $r$ is an integer satisfying $r \geq n-d$. Let $J_1=(J \cap k[x_1,\ldots,x_r])S$ and $J_2=\Phi_r(J)$. Then $J_1 \subset J \subset J_2$ with $\Pi_r(J_1)=\Pi_r(J)$.
\begin{lemma}\label{5.2}
We have:
\begin{enumerate}
\item $e_i(S/J)=e_i(S/J_2)$ for $0 \leq i \leq r+d-n$;
\item $e_i(S/J)=e_i(S/J_1)$ for $0 \leq i \leq r+d-n-1$, and
\begin{align*}
e_{r+d-n}(S/J_1)-e_{r+d-n}(S/J)=(-1)^{r+d-n}\rank_{k[x_{r+1},\ldots,x_n]}(J_2/J_1)\\
=(-1)^{r+d-n}\dim_k(\Pi_r(J_2)/\Pi_r(J_1)).
\end{align*}
\end{enumerate}
\end{lemma}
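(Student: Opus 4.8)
The plan is to feed two short exact sequences into \Cref{5.1}. For part (1) I would use
\[
0 \to J_2/J \to S/J \to S/J_2 \to 0,
\]
and for part (2)
\[
0 \to J/J_1 \to S/J_1 \to S/J \to 0.
\]
In both sequences the right-hand term ``$M_2$'' of \Cref{5.1} is zero, so the outcome is governed entirely by the dimension of the leftmost module, and the first task is a dimension count. Note that $J\cap k[x_1,\ldots,x_r]=\Pi_r(J)$ as an ideal of $S(r)$, so $J_1=\Pi_r(J)S$, and likewise $J_2=\Phi_r(J)$ has $\Pi_r(J_2)=\overline{\Phi_r}(J)$ by the Remark $\overline{\Phi_r}=\Pi_r\Phi_r$; thus $J_1,J_2$ are both extended from $S(r)$ and $S/J_i=(S(r)/\Pi_r(J_i))\otimes_k k[x_{r+1},\ldots,x_n]$, giving $\dim(S/J_i)=\dim(S(r)/\Pi_r(J_i))+(n-r)$. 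Since $\Pi_r(J)$ and $\overline{\Phi_r}(J)$ are of Borel type (the lemma preceding \Cref{3.7}), I would apply the dimension criterion for Borel-type ideals, the combinatorial criterion stated earlier for $\gin$: the dimension is $d$ iff the ideal contains pure powers of $x_1,\ldots,x_c$ but of no later variable. Both ideals contain the pure powers $x_1^{a_1},\ldots,x_c^{a_c}\in J\cap S(r)$ and contain no pure power of any $x_j$ with $c<j\leq r$ (such a power would pull back to a pure power in $J$, contradicting $\dim(S/J)=d$), so $\dim(S(r)/\Pi_r(J_i))=r-c=r+d-n$ and hence $\dim(S/J_1)=\dim(S/J_2)=d$. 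A separate easy count gives $\dim(J_2/J)\leq n-r-1$, since $J_2=J:(x_{r+1}\cdots x_n)^\infty$ only discards primary components whose radical is some $(x_1,\ldots,x_i)$ with $i\geq r+1$.

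With these dimensions, part (1) is immediate: applying \Cref{5.1}(2) to the first sequence with $D=\dim(J_2/J)\leq n-r-1$ yields $e_i(S/J)=e_i(S/J_2)$ for all $i<d-D$, and $d-D\geq r+d-n+1$, which covers the whole range $0\leq i\leq r+d-n$.

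For part (2) the governing quantity is $\dim(J/J_1)$, which I claim equals $n-r$; the cleanest route to this and to the multiplicity is through the larger module $J_2/J_1$. Because $J_1,J_2$ are extended from $S(r)$, as a module over $R=k[x_{r+1},\ldots,x_n]$ one has $J_2/J_1\cong (\Pi_r(J_2)/\Pi_r(J_1))\otimes_k R$, free over $R$ of rank $\dim_k(\Pi_r(J_2)/\Pi_r(J_1))$, which already gives the last equality in the statement. Setting $V=\Pi_r(J_2)/\Pi_r(J_1)$ and taking Hilbert series, $h_{J_2/J_1}(t)=h_V(t)/(1-t)^{n-r}$, so $q_{J_2/J_1}(t)=h_V(t)$; \emph{provided} $\dim_k V<\infty$ this shows $\dim(J_2/J_1)=n-r$ and $e_0(J_2/J_1)=h_V(1)=\dim_k V$. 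Granting finiteness, the sequence $0\to J/J_1\to J_2/J_1\to J_2/J\to 0$ together with $\dim(J_2/J)\leq n-r-1$ forces $\dim(J/J_1)=n-r$ and, on clearing $(1-t)^{n-r}$ and evaluating at $t=1$, $e_0(J/J_1)=e_0(J_2/J_1)=\dim_k V$. Then \Cref{5.1}(2) applied to the second sequence with $D=n-r$ gives $e_i(S/J_1)=e_i(S/J)$ for $i<d-(n-r)=r+d-n$, i.e.\ for $i\leq r+d-n-1$, while \Cref{5.1}(3) at $i=r+d-n$ gives $e_{r+d-n}(S/J_1)-e_{r+d-n}(S/J)=(-1)^{r+d-n}e_0(J/J_1)$, which is exactly the asserted formula (the degenerate case $J=J_1$, i.e.\ $V=0$, giving $0=0$).

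The crux, and the step I expect to be the main obstacle, is the finiteness $\dim_k V=\dim_k(\overline{\Phi_r}(J)/\Pi_r(J))<\infty$. Since $\Pi_r(J)\subseteq\overline{\Phi_r}(J)$ are nested Borel-type ideals of $S(r)$, this is equivalent to their $\mathfrak{m}(r)$-saturations coinciding, i.e.\ to the identity $\phi_r\Pi_r(J)=\phi_r\overline{\Phi_r}(J)$ (for Borel-type ideals saturation equals $\phi_r$). I would establish this by a ``one variable at a time'' absorption, whose engine is the following claim: for a Borel-type ideal $M$ of $S(t)$ and $r<t$, writing $P=\pi_{r+1}\cdots\pi_t$, one has $\phi_rP\phi_t(M)=\phi_rP(M)$. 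This follows from the Borel condition: a monomial generator $m$ of $P\phi_t(M)$ that survives the projection lies in $S(r)$, and from $x_t^a m\in M$ the Borel property with index pair $r<t$ produces $x_r^{S}m\in M$ with $x_r^S m\in S(r)$, whence $m\in\phi_rP(M)$. Starting from $\overline{\Phi_r}(J)=\Pi_r\Phi_r(J)$, I would move the leftmost factor of $\Phi_r=\phi_{r+1}\cdots\phi_n$ to the left through the commuting projections (using $\pi_i\phi_j=\phi_j\pi_i$ for $i\neq j$) until it meets its own index, recognize the remaining right-hand operator applied to $J$ as $\overline{\Phi_j}(J)$ (Borel type by the same lemma), and delete that $\phi_j$ via the engine; iterating removes $\phi_{r+1},\ldots,\phi_n$ one by one and leaves $\phi_r\Pi_r(J)$, which is the desired equality of saturations.
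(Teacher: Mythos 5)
Your proposal is correct, but it reaches the conclusion by a noticeably different route than the paper. For part (1) the paper filters $J=\Phi_n(J)\subseteq\Phi_{n-1}(J)\subseteq\cdots\subseteq\Phi_r(J)=J_2$ and bounds $\dim(\Phi_i(J)/\Phi_{i+1}(J))\leq n-i-1$ step by step using the Borel property, whereas you collapse this to the single sequence $0\to J_2/J\to S/J\to S/J_2\to 0$ and bound $\dim(J_2/J)$ by appealing to the fact that all associated primes of a Borel-type ideal are of the form $(x_1,\ldots,x_i)$; that fact is standard (and is essentially what the paper's telescoping argument re-proves), but it is not established in the paper, so you are trading the paper's self-contained induction for an external citation. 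For part (2) the paper compares $S/J_1$ with $S/J_2$ via the $R$-free module $J_2/J_1$ and then pipes the answer through part (1), while you compare $S/J_1$ with $S/J$ directly and transfer $e_0$ from $J_2/J_1$ to $J/J_1$ using $\dim(J_2/J)<n-r$; this is equivalent bookkeeping and both are fine. The genuine divergence is the finiteness of $\rank_R(J_2/J_1)$: the paper disposes of this in two lines by showing every generator $v\in k[x_1,\ldots,x_r]$ of $J_2$ satisfies $vx_j^{e_j}\in J_1$ for all $j\leq r$ (one application of the Borel condition), hence $J_2\subseteq J_1:\mathfrak{m}(r)^\infty$; you instead prove the stronger operator identity $\phi_r\overline{\Phi_r}(J)=\phi_r\Pi_r(J)$ by a variable-by-variable absorption induction in the spirit of \Cref{3.7}. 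Your identity is correct (I checked the ``engine'' step and the commutation bookkeeping) and yields finiteness, and it has the side benefit of identifying the common saturation, but it is considerably more machinery than the statement requires; the paper's direct argument is the same Borel-type manipulation applied exactly once.
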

\begin{proof}
For any $1 \leq i \leq n-1$, $\Phi_{i+1}(J) \subset \Phi_i(J)=\Phi_i(J):x_{i+1}^\infty$. For any monomial $u \in \Phi_i(J)\backslash\Phi_{i+1}(J)$, there is $e>0$ such that $ux^e_{i+1} \in \Phi_{i+1}(J)$. Since $J$ is of Borel type, $\Phi_{i+1}(J)$ is also of Borel type, so for any $j \leq i$, there exists some $e'>0$ such that $ux_j^{e'}\in\Phi_{i+1}(J)$. This means $\Phi_i(J)/\Phi_{i+1}(J)$ is annihilated by some power of $(x_1,\ldots,x_{i+1})$, so $\dim(\Phi_i(J)/\Phi_{i+1}(J)) \leq n-i-1$. Now consider the sequence
$$0 \to \Phi_i(J)/\Phi_{i+1}(J) \to S/\Phi_{i+1}(J) \to S/\Phi_i(J) \to 0.$$
By the argument above and \Cref{5.1}, we know
$$e_j(S/\Phi_i(J))=e_j(S/\Phi_{i+1}(J)), \forall j<d-(n-i-1)=d-n+i+1$$
So $e_j(S/J)=e_j(S/\Phi_r(J))$ for $j \leq r+d-n$. So (1) is proved. We know $S/J_1$ and $S/J_2$ are both free $k[x_{r+1},\ldots,x_n]$-module since $J_1,J_2$ are both generated by monomials in $x_1,\ldots,x_r$. For any $u \in J_2$, we write $u=vw$ where $v \in k[x_1,\ldots,x_r], w \in k[x_{r+1},\ldots,x_n]$. By definition of Borel type, for any $j \leq r$, there exists $u'=vx^e_j \in J$, which implies $u' \in J_1$. Therefore, $J_2 \subset J_1:\mathfrak{m}(r)^\infty$, so $J_2/J_1$ is a free $k[x_{r+1},\ldots,x_n]$-module of finite rank. Its Krull dimension is $\dim(J_2/J_1)=n-r$ and $e_0(J_2/J_1)=\rank_{k[x_{r+1},\ldots,x_n]}(J_2/J_1)=\dim_k(\Pi_r(J_2)/\Pi_r(J_1))$. Apply \Cref{5.1} to the exact sequence $0 \to J_2/J_1 \to S/J_1 \to S/J_2 \to 0$, we know $e_i(S/J_2)=e_i(S/J_1)$ for $0 \leq i \leq r+d-n-1$ and $e_{r+d-n}(S/J_1)=e_{r+d-n}(S/J_2)+(-1)^{r+d-n}e_0(J_2/J_1)$.
\end{proof}
\begin{lemma}\label{5.3}
Let $J$ be a monomial ideal of Borel type in $S$ and $\dim(S/J) \geq n-r$. Then $\overline{\Phi_r}(J)=\Pi_r(J)+u$ for $u \in \Pi_r(J):\mathfrak{m}(r)$ if and only if
$$\rank_{k[x_{r+1},\ldots,x_n]}(J_2/J_1)=\dim_k(\Pi_r(J_2)/\Pi_r(J_1))=1.$$
\end{lemma}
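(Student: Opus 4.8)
The plan is to reduce the claimed equivalence to a statement about the single pair of monomial ideals $\Pi_r(J_1) \subseteq \Pi_r(J_2)$ inside $S(r)$, and then to read it off from a monomial basis of the quotient. First I would record the two identifications that make the two halves of the statement directly comparable. By the Remark that $\overline{\Phi_r}=\Pi_r\Phi_r$ together with $J_2=\Phi_r(J)$, we have $\overline{\Phi_r}(J)=\Pi_r(J_2)$; and, as already noted in the setup preceding \Cref{5.2}, $\Pi_r(J_1)=\Pi_r(J)$ because the generators of $J$ lying in $k[x_1,\ldots,x_r]$ already generate $J\cap k[x_1,\ldots,x_r]\subseteq J_1$. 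Hence the condition ``$\overline{\Phi_r}(J)=\Pi_r(J)+u$ with $u\in\Pi_r(J):\mathfrak{m}(r)$'' is literally the assertion that $\Pi_r(J_2)=\Pi_r(J_1)+(u)$ for a single element $u$ with $\mathfrak{m}(r)u\subseteq\Pi_r(J_1)$.

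Next I would dispose of the left equality in the displayed line, $\rank_{k[x_{r+1},\ldots,x_n]}(J_2/J_1)=\dim_k(\Pi_r(J_2)/\Pi_r(J_1))$, which is already proved inside \Cref{5.2}: there $J_2/J_1$ is exhibited as a free $k[x_{r+1},\ldots,x_n]$-module whose rank is exactly $\dim_k(\Pi_r(J_2)/\Pi_r(J_1))$, so in particular the quotient $Q=\Pi_r(J_2)/\Pi_r(J_1)$ is finite-dimensional over $k$. It therefore remains to prove that $\Pi_r(J_2)=\Pi_r(J_1)+(u)$ for some $u\in\Pi_r(J_1):\mathfrak{m}(r)$ if and only if $\dim_k Q=1$.

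For this I would work with $Q$, a finite-dimensional graded $S(r)$-module which, since $\Pi_r(J_1)$ and $\Pi_r(J_2)$ are monomial ideals, has a $k$-basis consisting of the monomials lying in $\Pi_r(J_2)$ but not in $\Pi_r(J_1)$. If $\dim_k Q=1$ there is a unique such monomial $u$, so $\Pi_r(J_2)=\Pi_r(J_1)+(u)$ (any monomial of $\Pi_r(J_2)$ outside $\Pi_r(J_1)$ must equal $u$); moreover for each $i\leq r$ the monomial $x_iu\in\Pi_r(J_2)$ is either in $\Pi_r(J_1)$ or is a basis monomial, and since $x_iu\neq u$ uniqueness forces $x_iu\in\Pi_r(J_1)$, giving $u\in\Pi_r(J_1):\mathfrak{m}(r)=\Pi_r(J):\mathfrak{m}(r)$. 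Conversely, if $\Pi_r(J_2)=\Pi_r(J_1)+(u)$ with $\mathfrak{m}(r)u\subseteq\Pi_r(J_1)$, then $(u)=ku+\mathfrak{m}(r)u\subseteq ku+\Pi_r(J_1)$, so $Q$ is spanned over $k$ by the single class $\bar u$ and $\dim_k Q\leq 1$; as $u\notin\Pi_r(J_1)$ (a genuine extra generator, for otherwise $\overline{\Phi_r}(J)=\Pi_r(J)$ and the dimension is $0$), the class $\bar u$ is nonzero and $\dim_k Q=1$.

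I do not expect a serious obstacle, as the argument is bookkeeping once the two identifications are in place. The one point needing care is the interplay between the colon condition $u\in\Pi_r(J):\mathfrak{m}(r)$ and the uniqueness of the extra monomial: being annihilated by $\mathfrak{m}(r)$ modulo $\Pi_r(J_1)$ is precisely what prevents any $x_iu$, and hence any further monomial of $\Pi_r(J_2)$, from escaping $\Pi_r(J_1)$ and contributing extra dimension. I would also flag at the outset that the ``$+u$'' in the hypothesis must be read as a genuine new generator, i.e. $u\notin\Pi_r(J)$, since otherwise the right-hand dimension would be $0$ rather than $1$.
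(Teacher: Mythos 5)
Your proof is correct and follows the same route as the paper: both rest on $J_2/J_1$ being free over $k[x_{r+1},\ldots,x_n]$ with rank equal to $\dim_k(\Pi_r(J_2)/\Pi_r(J_1))$. You are in fact more complete than the paper's one-line proof, which only records the equivalence of the rank with that dimension and leaves the translation into the condition ``$\overline{\Phi_r}(J)=\Pi_r(J)+u$ with $u\in\Pi_r(J):\mathfrak{m}(r)$'' implicit; your monomial-basis argument for that translation (in particular that $x_iu$ cannot be the unique basis monomial of the quotient, forcing $u\in\Pi_r(J_1):\mathfrak{m}(r)$) supplies exactly the omitted bookkeeping, and your remark that $u$ must be a genuine new generator matches the reading the paper intends.
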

\begin{proof}
Since $J_2/J_1$ is free over $k[x_{r+1},\ldots,x_n]$, its rank is one if and only if modulo $(x_{r+1},\ldots,x_n)$ the quotient is a one-dimensional $k$-vector space, if and only if $\dim_k(\Pi_r(J_2)/\Pi_r(J_1))=1$.
\end{proof}
\begin{theorem}\label{5.4}
Let $P$ be a homogeneous prime ideal in $S$, $\dim(S/P)=d$. Take $1 \leq s \leq d$ and let $r=n-s$. Choose $s$ general linear forms $l_1,\ldots,l_s$. Denote $P_s=\pi_{l_1}\ldots\pi_{l_s}P \subset S(r)$. Then:
\begin{enumerate}
\item $e_i(S/P)=e_i(S(r)/P_s)$ for $0 \leq i \leq d-s-1$.
\item If $e_{d-s}(S/P)=e_{d-s}(S(r)/P_s)-(-1)^{d-s}$, then $\depth(S/P)=n-r-1$.
\end{enumerate}
\end{theorem}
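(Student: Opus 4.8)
The plan is to pull everything back to the generic initial ideal $J=\gin(P)$, where the statement becomes purely combinatorial and the structural result \Cref{4.2} can be combined with the numerical results \Cref{5.2} and \Cref{5.3}. Two facts drive the translation. First, since $\gin$ is computed by applying a generic automorphism and then taking the reverse-lexicographic initial ideal, it preserves the Hilbert function and hence every Hilbert coefficient, so $e_i(S/P)=e_i(S/J)$ for all $i$. Second, iterating \Cref{3.2} gives $\gin(P_s)=\pi_{r+1}\cdots\pi_n(\gin(P))=\Pi_r(J)$, so $e_i(S(r)/P_s)=e_i(S(r)/\Pi_r(J))$ for all $i$. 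In this way both sides of the hypothesis are rewritten entirely in terms of $J$ and its projection $\Pi_r(J)$.

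Next I would set $J_1=(J\cap k[x_1,\ldots,x_r])S$ and $J_2=\Phi_r(J)$ as in \Cref{5.2} and identify the Hilbert coefficients of $S(r)/\Pi_r(J)$ with those of $S/J_1$. Since $J_1$ is generated by monomials in $x_1,\ldots,x_r$, one has $S/J_1\cong (S(r)/\Pi_r(J))[x_{r+1},\ldots,x_n]$, whence $h_{S/J_1}(t)=h_{S(r)/\Pi_r(J)}(t)/(1-t)^s$; because $\dim(S/J_1)=\dim(S(r)/\Pi_r(J))+s$, the extra powers of $1-t$ cancel and $q_{S/J_1}(t)=q_{S(r)/\Pi_r(J)}(t)$, so $e_i(S/J_1)=e_i(S(r)/\Pi_r(J))=e_i(S(r)/P_s)$ for every $i$. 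The hypothesis $s\le d$ gives $r\ge n-d$, so \Cref{5.2} applies; its part (2) yields $e_i(S/J)=e_i(S/J_1)$ for $i\le d-s-1$, and combining the identifications above proves part (1) immediately.

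For part (2), the difference formula in \Cref{5.2}(2) reads $e_{d-s}(S(r)/P_s)-e_{d-s}(S/P)=e_{d-s}(S/J_1)-e_{d-s}(S/J)=(-1)^{d-s}\dim_k(\Pi_r(J_2)/\Pi_r(J_1))$, so the hypothesis $e_{d-s}(S/P)=e_{d-s}(S(r)/P_s)-(-1)^{d-s}$ is exactly the statement $\dim_k(\Pi_r(J_2)/\Pi_r(J_1))=1$. Since $d\ge s$, \Cref{5.3} then shows this is equivalent to $\overline{\Phi_r}(J)=\Pi_r(J)+u$ for some $u\in\Pi_r(J):\mathfrak{m}(r)$, putting us in the hypotheses of \Cref{4.2}. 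I would then rule out $u=1$: as $J$ is of Borel type with $\dim(S/J)=d$, it contains no pure power of any $x_j$ with $j>c$, and a Borel-move consolidation argument shows $J$ therefore has no minimal generator lying entirely in $k[x_{r+1},\ldots,x_n]$; since $\overline{\Phi_r}(J)$ is generated by the images $\Phi_r$ of the generators of $J$, this forces $1\notin\overline{\Phi_r}(J)$, i.e. $u\neq 1$.

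With $u\neq 1$, parts (2) and (3) of \Cref{4.2} give that $J$ is minimally generated by $J\cap k[x_1,\ldots,x_r]$ together with a single extra generator $v=ux_{r+1}^e$ with $e>0$, all generators lying in $k[x_1,\ldots,x_{r+1}]$. Hence the largest index of a variable dividing a minimal generator of $J$ is exactly $r+1$: the generators in $k[x_1,\ldots,x_r]$ contribute index at most $r$, $v$ contributes $x_{r+1}$, and no generator reaches $x_{r+2},\ldots,x_n$. The depth formula $\depth(S/P)=n-\max\{i:x_i\mid u\in G(J)\}$ recalled in the introduction then gives $\depth(S/P)=n-(r+1)=n-r-1$. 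I expect the main obstacle to be the middle bookkeeping — verifying that $S(r)/P_s$, $S(r)/\Pi_r(J)$ and $S/J_1$ share the same $q$-polynomial so that \Cref{5.2} can be invoked — together with the exclusion of $u=1$, which is the one place where the primeness of $P$ (through \Cref{4.2}) and the Borel-type structure genuinely interact.
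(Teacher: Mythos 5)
Your proposal is correct and follows essentially the same route as the paper: translate everything to $J=\gin(P)$ and $\Pi_r(J)$ via \Cref{3.2}, identify $e_i(S(r)/P_s)$ with $e_i(S/J_1)$ through the regular sequence $x_{r+1},\ldots,x_n$, and then combine \Cref{5.2}, \Cref{5.3} and the structure theorem for $\gin$ of a prime with the Eliahou--Kervaire/Bayer--Stillman depth formula. Your explicit exclusion of the $u=1$ case (via the absence of pure powers of $x_j$ for $j>c$ in a Borel-type ideal of dimension $d$) is a detail the paper leaves implicit, but it is the same argument in substance.
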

\begin{proof}
Let $J=\gin(P), J_1=(J\cap k[x_1,\ldots,x_r])S, \bar{J_1}=\Pi_r(J)$. Then $\bar{J_1}=\gin(P_s)$ by \Cref{3.5}. Since $J_1$ is a monomial ideal in $S(r)$, $S/(J_1,x_{r+1},\ldots,x_n)=S(r)/\bar{J_1}$ and $(x_{r+1},\ldots,x_n)$ is a regular sequence of linear forms on $S/J_1$. Therefore, $q_{S/J_1}=q_{S(r)/\bar{J_1}}$ and $e_i(S/J_1)=e_i(S(r)/\bar{J_1})$ for any $i$. Now taking generic initial ideal does not change the Hilbert series, so the Hilbert coefficients are the same, so we have $e_i(S/P)=e_i(S/P_s)$ for $0 \leq i \leq d-s-1$. If $e_{d-s}(S/P)=e_{d-s}(S(r)/P_s)+(-1)^{d-s}$, then $e_{d-s}(S/J)=e_{d-s}(S/J_1)-(-1)^{d-s}=e_{d-s}(S(r)/\bar{J_1})-(-1)^{d-s}$. By \Cref{5.3} we know $\overline{\Phi_r}(J)=\Pi_r(J)+u$ with $u \in \Pi_r(J):\mathfrak{m}(r)$. Now by \Cref{4.1} we know $\depth(S/P)=n-r-1$.
\end{proof}

\section{Height 1 prime ideal and Artinian hyperplane section}
When the height of the prime is one, the possible initial ideal is very simple.
\begin{proposition}\label{6.1}
Let $n \geq 3$ and $J$ be a monomial ideal in $S$. Then there exists a prime ideal $P$ of $S$ such that $J=\gin(P)$ for some height $1$ prime $P$ if and only if $J=x_1^e$ for some $e>0$.    
\end{proposition}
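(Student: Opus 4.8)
The plan is to exploit the fact that $S$ is a unique factorization domain, so that a height-$1$ homogeneous prime is forced to be as simple as possible, and then to compute its generic initial ideal by hand. Throughout I use $<=<_{\rev}$.

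For the ``only if'' direction I would first reduce to a principal generator. Since $S$ is a UFD, every height-$1$ prime is principal; being homogeneous, $P$ is generated by a single irreducible homogeneous polynomial $f$ of some degree $e\geq 1$, so $P=(f)$. The key elementary observation is that the initial ideal of a \emph{principal} ideal is again principal: every nonzero element of $(g)$ has the form $gh$, and $\iin(gh)=\iin(g)\iin(h)$, so letting $h$ range over all monomials gives $\iin((g))=(\iin(g))$. Applying this to $g=\alpha(f)$ for $\alpha\in GL_n(k)$, and noting $\alpha((f))=(\alpha(f))$ because $\alpha$ is an automorphism, yields $\iin(\alpha(P))=(\iin(\alpha(f)))$.

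Next I would pin down the leading monomial for generic $\alpha$. Specializing all variables but $x_1$ to $0$ in $\alpha(f)$ reads off the coefficient of $x_1^e$ as a value of $f$ at a vector built from a row of $\alpha$; since $f\neq 0$ and $k$ is infinite, this value is nonzero on a dense open subset of $GL_n(k)$. As $x_1^e$ is the $<_{\rev}$-largest monomial of degree $e$, its nonvanishing coefficient forces $\iin(\alpha(f))=x_1^e$ there. Because $GL_n(k)$ is irreducible this open set meets the set on which $\gin$ is computed, so $\gin(P)=(x_1^e)$. For the ``if'' direction I would produce a witness prime: here the hypothesis $n\geq 3$ enters, and I set $f=x_1^e-x_2^{e-1}x_3$. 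This is homogeneous of degree $e$, and viewed as a polynomial in $x_3$ over the UFD $k[x_1,x_2,x_4,\ldots,x_n]$ it is linear and primitive (its coefficients $x_1^e$ and $-x_2^{e-1}$ are coprime), hence irreducible; thus $P=(f)$ is a height-$1$ homogeneous prime, and the computation above gives $\gin(P)=(x_1^e)=J$.

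The main obstacle is really just the bookkeeping in the generic-initial computation, namely confirming that $\iin((g))=(\iin(g))$ and that the $x_1^e$-coefficient of $\alpha(f)$ is generically nonzero, together with a characteristic-free check of irreducibility of the explicit form $f$. I expect the role of $n\geq 3$ to be exactly to guarantee existence of an irreducible form in every degree (which can fail for $n=2$, e.g.\ over an algebraically closed field, where every binary form of degree $\geq 2$ factors), so that the backward direction has a witness at all.
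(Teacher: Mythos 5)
Your proposal is correct and follows essentially the same route as the paper: reduce to a principal ideal via the UFD property, observe that the generic initial term of an irreducible form of degree $e$ is $x_1^e$, and exhibit the same witness $x_1^e-x_2^{e-1}x_3$ for the converse. The only (cosmetic) difference is the irreducibility check — you argue that $f$ is linear and primitive in $x_3$, while the paper applies Eisenstein's criterion at $(x_3)$ — and your version fills in more of the bookkeeping (e.g.\ $\iin((g))=(\iin(g))$ and the genericity of the $x_1^e$-coefficient) that the paper leaves implicit.
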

\begin{proof}
Since $S$ is a UFD, a height $1$ prime is just a principal ideal generated by an irreducible ideal $f$. Now for a general linear change of coordinate $\alpha$, $\iin(\alpha f)=x_1^{\deg(f)}$. Conversely for any degree $d$, we have a polynomial $x_1^d-x_2^{d-1}x_3$. Apply the Eisenstein criterion for the ideal $(x_3)$, we know it is irreducible, and its generic initial monomial is just $x_1^d$.    
\end{proof}
\begin{corollary}\label{6.2}
Let $J=\gin(P)$. Suppose $\het(J)=1$, then $J=x_1^e$ for some $e>0$.    
\end{corollary}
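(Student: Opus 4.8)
The plan is to derive \Cref{6.2} from \Cref{6.1} together with the invariance of height under $\gin$. The only input needed beyond \Cref{6.1} is the observation that, for any homogeneous ideal $I$, the ideal $\gin(I)$ has the same Hilbert series as $I$: a generic coordinate change is a graded automorphism of $S$ and hence preserves the Hilbert function, and passing to the initial ideal preserves it as well (this fact is already used in the proof of \Cref{5.4}). Consequently $\dim(S/\gin(I))=\dim(S/I)$, so $\codim$ and $\het$ are unchanged. Applying this with $I=P$ gives $\het(P)=\het(J)=1$, so $P$ is a height-one homogeneous prime whose generic initial ideal is $J$.

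Now I would simply invoke the ``only if'' direction of \Cref{6.1}: since $P$ is a height-one prime with $\gin(P)=J$, that proposition forces $J=x_1^e$ for some $e>0$. For completeness it is worth recalling the mechanism, since it is short: $S$ is a UFD, so a height-one prime is principal, $P=(f)$ with $f$ an irreducible homogeneous polynomial of some degree $e$. For a general $\alpha\in GL_n(k)$ the coefficient of $x_1^e$ in $\alpha(f)$ is nonzero, and because $x_1^e$ is the $<_{\rev}$-largest monomial of degree $e$, we get $\iin(\alpha(f))=x_1^e$; hence $J=\gin(P)=(x_1^e)$.

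The only point requiring care --- and the nearest thing to an obstacle --- is the hypothesis $n\geq 3$ in \Cref{6.1}, which the statement of \Cref{6.2} does not impose. That hypothesis is used only for the ``if'' direction, where the explicit irreducible polynomial $x_1^d-x_2^{d-1}x_3$ needs a third variable. The ``only if'' direction invoked here uses nothing beyond $S$ being a UFD, so it holds verbatim for $n=1,2$ as well. Thus I would either silently restrict to $n\geq 3$ or add a one-line remark that the implication we need is valid for all $n$; apart from this bookkeeping, the corollary is immediate once the height-invariance of $\gin$ is recorded.
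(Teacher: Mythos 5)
Your argument is correct and follows the paper's own proof of \Cref{6.2}: both deduce $\dim(S/J)=\dim(S/P)$ from the invariance of the Hilbert series under $\gin$, conclude $\het(P)=1$ (the paper citing that $S$ is catenary and $P$ is prime, which you use implicitly in saying height is determined by dimension), and then invoke \Cref{6.1}. Your remark that only the ``only if'' direction of \Cref{6.1} is needed, so the hypothesis $n\geq 3$ is harmless here, is a correct and worthwhile clarification.
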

\begin{proof}
$\dim(S/J)=\dim(S/P)=n-1$. So $\het(P)=1$ because $S$ is catenary and $P$ is prime, and \Cref{6.2} follows from \Cref{6.1}.    
\end{proof}
Now we consider an artinian hyperplane section of $S/P$, that is, we take section with $s=d$ hyperplanes. In this case $c=n-d=n-s=r$. We have:
\begin{proposition}
Let $\dim(S/P)=d$, $J=\gin(P)$. Then:
\begin{enumerate}
\item $x_1,\ldots,x_c \in \sqrt{J}$ and $x_{c+1},\ldots,x_n \notin \sqrt{J}$.
\item $R=k[x_{c+1},\ldots,x_n]$ is a Noether normalization of $S/P$ and $S/J$.
\item $S/\Phi_c(J)$ is free over $R$ and $\Phi_c(J)/J$ is $R$-torsion.
\item $\deg(S/P)=\deg(S/J)=\rank_R(S/J)=\rank_R(S/\Phi_c(J))=\dim_k(S/\Phi_c(J)+(x_{c+1},\ldots,x_n))$.
\end{enumerate}
\end{proposition}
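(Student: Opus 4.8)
My plan is to prove the four items in order, using the proposition that characterizes $\dim(S/I)$ through the pure powers appearing in $\gin(I)$, together with the fact that $J=\gin(P)$ is of Borel type. Since $\dim(S/P)=d$, that proposition gives pure powers $x_1^{a_1},\ldots,x_c^{a_c}\in J$ and no pure power of $x_{c+1},\ldots,x_n$ in $J$. For a monomial ideal, $x_i\in\sqrt{J}$ precisely when some $x_i^N\in J$, so (1) is immediate. At this stage I would also record the sharper fact that $J$ contains no monomial lying entirely in $k[x_{c+1},\ldots,x_n]$: if such a monomial $w\neq 1$ were in $J$, Borel type would let me successively replace its variables of highest index by $x_{c+1}$, eventually forcing a pure power of $x_{c+1}$ into $J$, contradicting (1). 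Hence $J\cap k[x_{c+1},\ldots,x_n]=0$, which will supply the injectivity needed for (2).

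For (2) I would first treat $S/J$ directly. The standard monomials (those not in $J$) have $x_i$-exponent strictly below $a_i$ for each $i\le c$, so $S/J$ is spanned over $R=k[x_{c+1},\ldots,x_n]$ by the finitely many standard monomials in $x_1,\ldots,x_c$; this gives module-finiteness, while $J\cap R=0$ gives $R\hookrightarrow S/J$. As $\dim R=d=\dim(S/J)$, the ring $R$ is a Noether normalization of $S/J$. For $S/P$ I would pass to the generic $\alpha\in GL_n(k)$ realizing $J=\iin(\alpha(P))$: the standard-monomial basis of $S/\alpha(P)$ together with $\alpha(P)\cap R=0$ (inherited from $J\cap R=0$) yields the same conclusion for $S/\alpha(P)\cong S/P$. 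The main obstacle lives exactly here: the literal subring $R$ need \emph{not} normalize $S/P$ before the coordinate change (for instance if a variable among $x_{c+1},\ldots,x_n$ happens to lie in $P$), so the statement for $S/P$ must be read in the generic coordinates that define the gin, and I would make this dependence explicit rather than gloss over it.

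For (3) the key observation is that $\Phi_c(J)=J:(x_{c+1}\cdots x_n)^\infty$ is generated by monomials involving only $x_1,\ldots,x_c$. Consequently $S/\Phi_c(J)\cong\bigl(k[x_1,\ldots,x_c]/(\Phi_c(J)\cap k[x_1,\ldots,x_c])\bigr)\otimes_k R$, which is a free $R$-module; its rank is finite because the $x_1,\ldots,x_c$-quotient is artinian, still containing the powers $x_i^{a_i}$. For the torsion claim, every $u\in\Phi_c(J)$ satisfies $u\,(x_{c+1}\cdots x_n)^m\in J$ for some $m$, and $(x_{c+1}\cdots x_n)^m$ is a nonzero element of the domain $R$; hence it kills the class of $u$ in $\Phi_c(J)/J$, so $\Phi_c(J)/J$ is $R$-torsion.

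Finally, for (4) I would chain the equalities. First $\deg(S/P)=\deg(S/J)$ because passing to the generic initial ideal preserves the Hilbert function, hence the multiplicity. Next $\deg(S/J)=\rank_R(S/J)$: by (2), $S/J$ is a finite graded $R$-module of dimension $d=\dim R$, and for such a module the multiplicity equals the $R$-rank, since the torsion submodule contributes in lower dimension and does not affect the leading Hilbert coefficient. Applying $\rank_R(-)$, which is additive and annihilates torsion, to the sequence $0 \to \Phi_c(J)/J \to S/J \to S/\Phi_c(J) \to 0$ and invoking (3) gives $\rank_R(S/J)=\rank_R(S/\Phi_c(J))$. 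Lastly, since $S/\Phi_c(J)$ is free over $R$ by (3), its rank equals the $k$-dimension of its fiber $S/\Phi_c(J)\otimes_R R/\mathfrak{m}(\cdot)=S/(\Phi_c(J)+(x_{c+1},\ldots,x_n))$, completing the final equality. Beyond the coordinate subtlety in (2), the only point requiring care is the clean statement that rank over a Noether normalization coincides with multiplicity, which I would cite or verify from the Hilbert series $1/(1-t)^d$ of $R$.
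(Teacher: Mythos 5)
The paper states this proposition without any proof, so there is nothing to compare against; your write-up is a correct and complete justification along the standard lines one would supply. The one genuinely delicate point---that $R=k[x_{c+1},\ldots,x_n]$ is a Noether normalization of $S/P$ only after the generic coordinate change realizing $J=\iin(\alpha(P))$, with injectivity inherited from $J\cap R=0$ and finiteness from the standard monomials---is exactly the point you isolate and handle correctly. The remaining steps (the Borel-type exchange argument ruling out monomials of $J$ supported on $x_{c+1},\ldots,x_n$; the tensor decomposition giving freeness of $S/\Phi_c(J)$ over $R$; additivity of rank on $0\to\Phi_c(J)/J\to S/J\to S/\Phi_c(J)\to 0$; and the identification of multiplicity with $R$-rank, which you should indeed pin down via the associativity formula or the Hilbert series of $R$ as you propose) are all sound.
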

In this case, by \Cref{5.2} and \Cref{5.3}, $\overline{\Phi_c(J)}=\Pi_c(J)+u$ if and only if $\deg(S/J)+1=\deg(S/\Phi_c(J))+1=\deg(S(c)/\Pi_c(J))$. That is, the degree increases by exactly one after applying $\Pi_c$. So by \Cref{4.1} we have:
\begin{theorem}
Let $P$ be a prime ideal in $S$, $\dim(S/P)=d$, $c=n-d$, $J=\gin(P)$, and $J$ satisfies $\deg(S/\Phi_c(J))+1=\deg(S(c)/\Pi_c(J))$. Then $J=(J\cap k[x_1,\ldots,x_{c+1}])S$, and $x_{c+1}$ appears in the minimal generator of $J$. Moreover, $\depth(S/P)=d-1$, and $S/P$ is almost Cohen-Macaulay.   
\end{theorem}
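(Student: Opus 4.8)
The plan is to reduce the statement to the structural result of \Cref{4.2} by first converting the degree hypothesis into the ideal-theoretic condition $\overline{\Phi_c}(J)=\Pi_c(J)+u$ with $u\in\Pi_c(J):\mathfrak{m}(c)$, exactly as outlined in the paragraph preceding the statement. Concretely, I would put $J=\gin(P)$, $J_1=(J\cap k[x_1,\ldots,x_c])S$, $J_2=\Phi_c(J)$, and specialize \Cref{5.2} to $r=c$. Since then $r+d-n=0$, part (1) gives $\deg(S/J)=\deg(S/J_2)=\deg(S/\Phi_c(J))$, and part (2) gives
$$\deg(S/J_1)-\deg(S/J)=\dim_k\bigl(\Pi_c(J_2)/\Pi_c(J_1)\bigr)=\dim_k\bigl(\overline{\Phi_c}(J)/\Pi_c(J)\bigr),$$
using $\Pi_c(J_1)=\Pi_c(J)$ and $\Pi_c(J_2)=\overline{\Phi_c}(J)$. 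As in the proof of \Cref{5.4}, $\deg(S/J_1)=\deg(S(c)/\Pi_c(J))$ because $x_{c+1},\ldots,x_n$ is a regular sequence on $S/J_1$. Hence the hypothesis $\deg(S/\Phi_c(J))+1=\deg(S(c)/\Pi_c(J))$ says precisely $\dim_k(\overline{\Phi_c}(J)/\Pi_c(J))=1$, which by \Cref{5.3} is equivalent to $\overline{\Phi_c}(J)=\Pi_c(J)+u$ for a single monomial $u\in\Pi_c(J):\mathfrak{m}(c)$.

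Next I would rule out the degenerate value $u=1$, so that the sharper conclusions of \Cref{4.2} become available. Because $\dim(S/J)=\dim(S/P)=d$ and $J$ is of Borel type, $J$ contains a pure power of each of $x_1,\ldots,x_c$; these pure powers survive in $\Phi_c(J)$, so $\overline{\Phi_c}(J)$ is $\mathfrak{m}(c)$-primary and in particular a proper ideal of $S(c)$. If $u=1$ we would get $\overline{\Phi_c}(J)=\Pi_c(J)+(1)=S(c)$, a contradiction, so $u\neq 1$. Applying \Cref{4.2} with $r=c$ in the case $u\neq 1$, part (3) yields that $J$ is generated by monomials in $k[x_1,\ldots,x_{c+1}]$, i.e.\ $J=(J\cap k[x_1,\ldots,x_{c+1}])S$, while part (2) shows that the unique minimal generator of $J$ lying outside $k[x_1,\ldots,x_c]$ is $v=ux_{c+1}^e$ with $e>0$. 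Since $u\in S(c)=k[x_1,\ldots,x_c]$, the generator $v$ is divisible by $x_{c+1}$, which proves the first two assertions.

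Finally I would read off the depth from the Bayer--Stillman/Eliahou--Kervaire formula $\depth(S/P)=n-\max\{i:x_i\mid w\text{ for some }w\in G(J)\}$ recalled in the introduction. Every minimal generator of $J$ lies in $k[x_1,\ldots,x_{c+1}]$, so the maximal such index is at most $c+1$; and since $v=ux_{c+1}^e$ with $e>0$ is divisible by $x_{c+1}$, the maximal index equals exactly $c+1$. Therefore $\depth(S/P)=n-(c+1)=d-1$, and as $\dim(S/P)=d$ this is $\dim(S/P)-1$, so $S/P$ is almost Cohen--Macaulay. The only step that requires genuine care is the translation in the first paragraph, where one must correctly match the two multiplicity computations (over $S$ and over $S(c)$) through the regular sequence $x_{c+1},\ldots,x_n$ and then verify $u\neq 1$; once $u\neq 1$ is secured, the remainder is a direct assembly of \Cref{5.2}, \Cref{5.3}, \Cref{4.2}, and the depth formula.
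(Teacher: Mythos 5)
Your proof takes essentially the same route as the paper: it converts the degree hypothesis via \Cref{5.2} and \Cref{5.3} into the condition $\overline{\Phi_c}(J)=\Pi_c(J)+u$ with $u\in\Pi_c(J):\mathfrak{m}(c)$, and then reads off the structure of $J$ and the depth from \Cref{4.2} and the Eliahou--Kervaire formula. The only addition is that you explicitly verify $u\neq 1$ (using that $\dim(S/\Phi_c(J))=d\geq 1$ forces $\overline{\Phi_c}(J)$ to be proper), a point the paper leaves implicit; your argument for it is sound.
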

\begin{remark}
This is a generalization of Theorem 5.1 in Kwak's paper \cite{cuong2020reduction}, where we have $S(c)/\Pi_c(J)=S(c)/\mathfrak{m}(c)^{r+1}$. Here $r$ is the reduction number of $S/P$.    
\end{remark}

\section*{Acknowledgements}
The author would like to thank Giulio Caviglia for introducing this problem and providing references. The author would also like to thank Matteo Varbaro for helpful suggestions.

\bibliographystyle{plain}
\bibliography{reference}
\end{sloppypar}
\end{document}